\numberwithin{equation}{section}
\newcommand{\ind}[1]{\textbf I_{#1}}
\newcommand{\ww}{\textbf I_{n+1}^{\text{\tiny{WW}}}}
\newcommand{\wb}{\textbf I_{n+1}^{\text{\tiny{WB}}}}
\newcommand{\bb}{\textbf I_{n+1}^{\text{\tiny{BB}}}}
\newcommand{\hww}{\textbf I^{\text{\tiny{WW}}}}
\newcommand{\hwb}{\textbf I^{\text{\tiny{WB}}}}
\newcommand{\hbb}{\textbf I^{\text{\tiny{BB}}}}
\newcommand{\wh}{\textbf I_{n+1}^{\text{\tiny{W}}}}
\newcommand{\bl}{\textbf I_{n+1}^{\text{\tiny{B}}}}
\newcommand{\geqla}[1]{\stackrel{*#1}{\geq}}
\newcommand{\leqla}[1]{\stackrel{*#1}{\leq}}
\newcommand{\fa}{\mathscr F}
\newcommand{\ex}{\mathbb E}
\newcommand{\Var}{\mbox{Var}}
\newcommand{\pr}{\mathbb P}
\newcommand{\real}{\mathbb R}
\newcommand{\bigo}{\mathcal O}
\newcommand{\sgn}{\text{sgn}}
\newcommand{\tmin}{t_{\mbox{\tiny min}}}
\newcommand{\tmax}{t_{\mbox{\tiny max}}}
\newcommand{\e}{\mathcal E}
\newcommand{\N}{\mathcal N}
\newcommand{\D}{\mathcal D}
\newcommand{\floor}[1]{\left\lfloor{#1}\right\rfloor}
\newcommand{\acc}{X_\infty}
\newtheorem{theorem}{Theorem}
\newtheorem{lemma}{Lemma}
\newtheorem{rem}{Remark}
\newtheorem{deff}{Definition}
\title[Generalized Pólya urns via stochastic approximation]
{Generalized Pólya urns via stochastic approximation}
\author{Henrik Renlund}
\address{Department of Mathematics, Uppsala University, PO Box 480,
S-751 06 Uppsala, Sweden}
\email{henrik.renlund@math.uu.se}
\urladdr{http://www.math.uu.se/$\sim$renlund/}
\keywords{Stochastic approximation, unstable equilibrium, stable equilibrium, touchpoint, 
Generalized Pólya urns}
\subjclass[2000]{60G99, 62L20}
\date{}
\begin{document}

\maketitle

\begin{abstract}
$\phantom .$ \\ 
We collect, survey and develop methods of (one-dimensional) 
stoch\-astic approximation in a framework
that seems suitable to handle fairly broad generalizations of Pólya urns.

To show the applicability of the results
we determine the limiting fraction of balls in an urn with balls of two colors.
We consider two models generalizing the Pólya urn, in the first one ball is
drawn and replaced with balls of (possibly) both colors according to which color
was drawn. In the second, two balls are drawn simultaneously and replaced along
with balls of (possibly) both colors according to what combination of colors
were drawn. 

\end{abstract}

\tableofcontents
\newpage

\pagenumbering{arabic}

\setcounter{page}{1}

\section{Introduction}

\subsection{Urns}
The urn is a common tool in probability theory and statistics and no student thereof 
can avoid it. Imagine an urn with $w$ white and $b$ black balls. 
At a beginners level, urns provide examples of how to calculate 
probabilities, e.g.\ the probability of drawing a white ball is the number of
white balls divided by the total number of balls, i.e.\ $w/(w+b)$. If we sample
more than one ball, say $n$ balls, from the urn and count the number of white ball we 
get examples of the binomial distribution (with parameters
$n$ and $w/(w+b)$) and hypergeometric distributions (with parameters
$w+b, n$ and $w/(w+b)$, depending on whether we sample with or without replacement. 
These distributions in turn are very important in statistical theory as they are
the key to understanding properties of surveys, e.g.\ voter polls,
such as margins of error. 

More aspects of probability theory can be illustrated via urns. Suppose we draw
two balls without replacement. The question ``what is the probability that the second ball is white?''
may introduce the concept of conditional probabilities, as the
answer depends on the knowledge we have (or lack) regarding the
outcome of the first draw. Urns are so useful that it is hard to imagine
an introductory text on probability and statistics not ever mentioning urns of any kind.
Any reader with a general interest in urns may consult \cite{JK77}.

In 1923 Eggenberger and Pólya introduced a new urn model in \cite{Polya},
now commonly referred to as a Pólya urn. An
urn has one white and one black ball. We sample one ball and replace it along
with one additional ball of the same color, and repeat this procedure. 
It was thought of as a simple model for a contagious disease.
The first draw might correspond to a doctor examining the very first patient of the day.
She then has a $50\%$ risk of being infected. Now, the essence of a contagious disease
is that the more people have it, the more likely you are to get it, and
vice versa. This is now 
reflected in the model in the following way. Say white ball means ``infected''.
After we draw a white ball we replace at along with one additional white
ball. Hence, the probability of drawing a white ball next time has risen to $2/3\approx 67\%$. It
basically means that the more infected patients the doctor gets, the more likely it
is that there are yet more to come. Of course, the
actual numbers in this example is by no means meant to be ``realistic'', 
it is rather a qualitative model. 

We can, however, play with the parameters of the model to better fit some
specific situation if needed. First, the initial composition of the urn
need not be 1 of each color. A \emph{rare}  disease might correspond
to $10\,000$ black balls and only $1$ white. Also, some diseases are more
contagious than others. We could incorporate this by stating that we should 
not add \emph{one} additional ball, but several, of the same color
as the one drawn, corresponding to a faster
spread of the disease. 

Any reader interested in Pólya urns and generalizations thereof
can start with \cite{Mah08}.

Our own interest in Pólya-like urn models comes from a similar 
situation as described above but rather than modelling infectious diseases, it
can model how something is learned, e.g.\ a ``brain'' trying 
to learn what to do in a specific situation. Assume for simplicity that 
there are only two possible ways to act, act 1 and act 2, and
that act 1 is the correct way to handle the situation and,
as such, leads to a reward of some kind. Act 2 is wrong and has no benefit
for our brain. However, at first it is not 
known to our brain which act is correct (if any). It must somehow learn this by trial and error.
 A very simple urn model describing how this brain could work
is the following. To model an initial state of ignorance, there is 
one white ball (meaning ``do act 1'') and one black ball (meaning
``do act 2'') so that the first time it just picks one ball (act) randomly.
Then, to model reinforcement learning, there is a  rule that if an act is deemed successful,
 more balls of the color corresponding to the act just performed
are added to the urn. In this case; if a white ball is drawn,
add, say, one additional white ball and if a black ball is drawn replace
it but add no more balls. Now, every time our brain performs 
the right act it becomes increasingly
likely that it will do so again. 

As with the previous model, the interest is mainly qualitative. One should
not expect that any brain works exactly like an urn. However, it captures
some of the dynamics of what one can think of as learning; one tends to
be more likely to do things that have proved successful in the past. 

Again, we can fine tune the parameters. More colors can mean more
ways to act, different reinforcement rules between colors can
specify how much benefit the brain gets from the different acts,
and so on. 

More specifically, it was questions 
relating to the so called
``signaling problems'' (communicated by Persi Diaconis and Brian Skyrms) 
that spawned the authors interest
in these matters. These refer to the situation where two (or more) agents
try to acquire a common language simultaneously via urns.
Recently, one of these problems was solved in \cite{APSV08}
which also contains a more thorough description of the problem.

This is some of the motivation behind studying urns evolving 
along the lines of  ``draw one or several balls and add
more balls according to
some prescribed rule depending on the colors of the drawn balls''. 
It is also the motivation for only looking at the fraction
of balls, as these dictate the probabilities of ``acting correctly''
in models of learning.

\subsection{Stochastic approximation algorithms}
A stochastic approximation algorithm $\{X_n\}$ is usually defined
as an $\real^d$-valued stochastic process adapted to a filtration $\{\fa_n\}$ such that
\begin{equation}\label{saa} X_{n+1}=X_n+\gamma_{n+1}[f(X_n)+\epsilon_{n+1}] \end{equation}
holds, where the decreasing ``steplengths'' $\gamma_n>0$ satisfy
$\sum_n \gamma_n=\infty$ and $\sum_n \gamma_n^2<\infty$.
The random variables $\gamma_n$ 
can be considered stochastic or deterministic but in either
case it is usually assumed that $\{\gamma_n\epsilon_n\}$ is
a martingale difference sequence, i.e.\ 
\begin{equation} \label{krav}
\ex[\gamma_n\epsilon_n|\fa_{n-1}]=0.
\end{equation}
The origin of this subject is \cite{RM51}, in which Robbins and
Monro considered the following
one-dimensional
problem; suppose that given an input $x$ to some
system in which we get $M(x)$ as output, where $M$ is an unknown
function and 
only observable through white noise. What we really observe
is thus $M(x)+\epsilon$, for some random variable $\epsilon$ with $\ex\epsilon=0$. 
We want to find the input $\theta$ so that
$M(\theta)=\alpha$ for some prescribed $\alpha$. 
For simplicity we might assume that $M$ is nondecreasing and that
$M(x)=\alpha$ has a unique solution $\theta$. 

A candidate algorithm for finding a sequence 
$\{X_n\}$ that converges (in some sense) to $\theta$ is
to start with some initial input $X_0=x_0$. Given a
value $X_n$, with $n\geq 0$, create the next element by
\[ X_{n+1}=X_n+\frac1{n+1}(\alpha-M(X_n)+\epsilon_{n+1}), \]
where $-\epsilon_n$ is the noise associated with the $n$'th
observation. The algorithm works on an intuitive level
since whenever $X_n\neq \theta$ then, on average, $X_{n+1}$
takes a step in the direction of $\theta$. 

This describes a stochastic approximation algorithm with
drift function $f(x)=\alpha-M(x)$ and steplengths
$\gamma_{n}=1/n$. Of course, there is nothing
in the formulation of the problem that demands us 
to set the steplengths to $1/n$. To demand
$\sum_n \gamma_n=\infty$ is natural since this
basically  means that the algorithm can wander
arbitrarily far, thus hopefully finding what it is looking for,
and not converging in a trivial manner.

Next, since
\[ X_n-x_0=\sum_{k=1}^n\gamma_k(f(X_{k-1})+\epsilon_{k}), \]
the requirement $\sum_n \gamma_n^2<\infty$ makes 
$\Var X_n$ bounded (under additional assumptions
on the error terms and $f$).

In the multidimensional case
the heuristics behind the algorithm (\ref{saa}) is that
it constitutes
a discrete time version of the ordinary differential equation
\begin{equation} \label{ODE}
 \frac{d}{dt}x_t=f(x_t),
\end{equation}
subject to ``noise''. If the noise vanishes for large $n$ it seems
plausible that the interpolation of $X_n$  should estimate
some trajectory of a solution $x_t$ of (\ref{ODE}), an idea made
precise in \cite{Ben99}, where more references may be found.
An overview may  also be found in \cite{Pem07}.
We are however only concerned with the one-dimensional case.

Any reader interested in other aspects of stochastic approximation
and applications may find \cite{Bor08} useful.

\subsection{How they fit}
Stochastic approximation is very well suited for urn models with
reinforcements such as the classical Pólya urn and generalizations
thereof. If a ball is drawn
from an urn and (a bounded number of) balls are added according to some reinforcement
scheme, the difference of the proportion of balls before and after
is approximately some function of the proportion times  $1/n$.

As an example, consider the so called Friedman's urn starting with one ball
each of two colors where $a>0$ balls of the same color and $b>0$ balls of the other
color are added along with the ball drawn. The proportion $Z_n$
of  either color then satisfies
\[ Z_{n+1}-Z_n=\frac{1}{2+(n+1)(a+b)}\big[f(Z_n)+\text{``noise''}\big], \]
with the drift function $f(Z_n)=b(1-2Z_n)$ and where ``noise'' is a 
martingale difference sequence. This resembles the situation
considered by Robbins and Monro and, as the drift always points towards
$1/2$, it seems intuitive that this is the point of convergence of 
$Z_n$ (in some sense). That this is so will follow from 
Theorem \ref{main} below. This  is ``easy'' since 1/2 is the
unique solution of $f(x)=0$. 

In other urn models $f(x)=0$ may have several roots. There are known
results that deal with multiple zeros, although often under
the property (\ref{krav}). Urn schemes where the total number of 
balls added each time is not constant tend to lose this property. 
We will generalize existing results under an assumption slightly weaker
than (\ref{krav}) and apply the results to
generalized Pólya urns.

\subsection[A generalized Pólya urn]{A generalized Pólya urn considered as a \\ stochastic approximation algorithm}
\label{1-draget}
First, we will show more precisely how stochastic approximation algorithms fit urn schemes
by presenting an application which will be studied in more detail below.
Consider an urn with balls of two colors, white and black say. Let $W_n$ and $B_n$ denote
the number of balls of each color, white and black respectively, 
 after the $n$'th draw and consider the initial values $W_0=w_0>0$ and
$B_0=b_0>0$ to be fixed. After each draw we notice the color and replace it along with
additional balls according to the \emph{replacement matrix}
\begin{align*}  & \quad\; \text{W}  \,\;\; \text{B} \nonumber \\
\begin{array}{c}
    \text{W} \\ \text{B} 
   \end{array} &
\left( \begin{array}{c c}
        a & b \\ c & d 
       \end{array} \right), 
\begin{array}{r}
 \mbox{where}\;\min\{a,b,c,d\}\geq 0\phantom{,} \\
\mbox{and}\;\max\{a,b,c,d\}>0,
\end{array} \end{align*}
so that, e.g.\ a white ball is replaced along with $a$ additional white and $b$ additional black
balls. We demand that $a,b,c,d$ are nonnegative numbers. 

This model is by no means new, chapter 3 of \cite{Mah08} gives a historical overview.
Setting $a=d=1$, $b=c=0$ and $W_0=B_0=1$ gives the classical Pólya urn described in 
the introduction.

We let $\wh$ and $\bl$ denote the indicators of getting a white and black ball in draw
$n$, respectively. We set $T_n=W_n+B_n$ and $Z_n=W_n/T_n$. Recursively, $W_n$ and $T_n$ evolve as
\begin{equation*}
 W_{n+1} = W_n+a\wh+c\bl \quad\quad\mbox{and}\quad\quad T_{n+1}=T_n+(a+b)\wh+(c+d)\bl
\end{equation*}
and hence, with $\Delta Z_n=Z_{n+1}-Z_n$, 
\begin{align*}
 \Delta Z_n&=\frac{1}{T_{n+1}}\big[W_n+a\wh+c\bl-Z_n(T_n+(a+b)\wh+(c+d)\bl)\big] \\
&=\frac{1}{T_{n+1}}[\wh(a-(a+b)Z_n)+\bl(c-(c+d)Z_n)]=\frac{Y_{n+1}}{T_{n+1}}.
\end{align*}
Let  $\fa_n$ denote the history of the process up to time $n$, i.e.\ the
$\sigma$-algebra $\sigma(X_1,\ldots,X_n)$.
We will define 
\begin{align*}
 f(Z_n)=\ex[Y_{n+1}|\fa_n]&=Z_n(a-(a+b)Z_n)+(1-Z_n)(c-(c+d)Z_n) \\
&=\alpha Z_n^2+\beta Z_n+c,
\end{align*}
where
\begin{equation*}
 \alpha=c+d-a-b\quad\quad\mbox{and}\quad\quad\beta=a-2c-d.
\end{equation*}
In the form of a stochastic approximation algorithm we can write
\[ \Delta Z_n=\gamma_{n+1}\big[f(Z_n)+U_{n+1}\big], \]
where $U_{n+1}=Y_{n+1}-f(Z_n)$ and $\gamma_{n+1}=1/T_{n+1}$. 

Now, $U_{n+1}$ is mean-zero ``noise'' but if
$a+b\neq c+d$ then in general $\ex_n \gamma_{n+1}U_{n+1}\neq 0$. However,
as will be shown later, $|\ex[\gamma_{n+1}U_{n+1}|\fa_n]|=\bigo(T_n^{-2})$,
and $T_n$ (usually) grows like $n$, so this conditional expectation
is vanishing fast.

\section{The method of stochastic approximation}
We will apply the stochastic approximation machinery to fractions and thus limit
ourselves to processes in $[0,1]$. This naturally restricts the noise 
and the function to be bounded. 

\subsection{Definition}
Stochastic variables are throughout  assumed to be defined on a
probability space $(\Omega,\fa,\pr)$, although we will find no need
to make any reference to the underlying measurable space
$(\Omega,\fa)$. We will also consider a filtration $\{\fa_n, n\geq 0\}$
to be given. 

To simplify notation,  let $\ex_n(\cdot)=\ex(\cdot|\fa_n)$ and
$\pr_n(\cdot)=\pr(\cdot|\fa_n)$
denote
the conditional expectation and probability, respectively, 
with respect to $\fa_n$. 

\begin{deff}\label{def} $\phantom{1}$\\
A stochastic approximation algorithm $\{X_n\}$ is a 
stochastic process taking values in $[0,1]$, adapted
to the filtration $\{\fa_n\}$, that satisfies
\begin{equation}\label{defeq}
  X_{n+1}-X_n=\gamma_{n+1}[f(X_n)+U_{n+1}],
\end{equation}
where $\gamma_n,U_n\in\fa_n, f:[0,1]\to\real$ and 
the following conditions hold a.s.
\begin{itemize}
 \item[(i)] $c_l/n\leq \gamma_n\leq c_u/n$,
 \item[(ii)] $|U_n|\leq K_u$,
 \item[(iii)] $|f(X_n)|\leq K_f$, and
 \item[(iv)] $|\ex_n(\gamma_{n+1}U_{n+1})|\leq K_e\gamma_n^2$,
\end{itemize}
where the constants $c_l,c_u,K_u,K_f,K_e$ are positive real numbers. 
For future reference, set $K_\Delta=c_u(K_f+K_u)$.
\end{deff}

\begin{rem}
There is no consensus in the scientific litterature as to exactly what 
constitutes a stochastic approximation algorithm. The main characteristic
is that a relation of type (\ref{defeq}) holds, although the range, measurability
etc.\ of the ingredients $\gamma_n, U_n$ and $f$ may differ. In this section we
state results concerning ''the'' process $\{X_n\}$ which throughout is
understood to be a stochastic approximation algorithm according
to our definition.
\end{rem}

\begin{rem}
 The condition (iv) could, in view of condition (i), equally well have been
formulated as $|\ex_n(\gamma_{n+1}U_{n+1})|\leq K_e'n^{-2}$, for some
positive constant $K_e'$. The formulation above arises naturally for
the applications toward the end of this paper.

Condition (iv) replaces the more common requirement (\ref{krav}), so that $\gamma_nU_n$ 
does not necessarily have conditional expectation 0, but this
expectation is tending to zero quickly.
In what follows, we verify that some results known to be true for condition
(\ref{krav}) carry over to the present situation, as well as present some new results.
\end{rem}

\subsection{Limit points}
In this section we establish that the accumulation points of the
process $\{X_n\}$ are a subset of the zeros of $f$, for continuous
$f$. This property is well known  and the 
ideas for the proofs of Lemma \ref{convergence}
and Lemma \ref{tozero}  are from \cite{Pem07}.  
Moreover, Theorem \ref{main} gives an existence result for the
limit of the process $\{ X_n\}$.

Let $W_n=\sum_1^n \gamma_kU_k$ so that we may write increments of the process $\{X_n\}$ as
\begin{equation*}
 X_{n+k}-X_n=\sum_{k=n+1}^{n+k}\gamma_kf(X_{k-1})+W_{n+k}-W_n. 
\end{equation*}

\begin{lemma}\label{wn}
 $\{W_n\}$ converges almost surely.
\end{lemma}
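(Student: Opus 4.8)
The plan is to exploit that $\{W_n\}$ fails to be a martingale only by the small amount controlled in condition (iv), and to extract a genuine martingale via a Doob-type decomposition. Writing the increments as $\Delta W_k=\gamma_k U_k$, I would set
\[
 D_k=\gamma_k U_k-\ex_{k-1}(\gamma_k U_k),\qquad A_n=\sum_{k=1}^n\ex_{k-1}(\gamma_k U_k),
\]
so that $M_n=\sum_{k=1}^n D_k=W_n-A_n$ with $M_0=0$. By construction $\ex_{k-1}(D_k)=0$, so $\{M_n\}$ is a martingale with respect to $\{\fa_n\}$, while $\{A_n\}$ is predictable. It then suffices to show that both $\{A_n\}$ and $\{M_n\}$ converge almost surely.

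For the predictable part, conditions (iv) and (i) give $|\ex_{k-1}(\gamma_k U_k)|\leq K_e\gamma_{k-1}^2\leq K_e c_u^2(k-1)^{-2}$ for $k\geq 2$. Hence $\sum_k|\Delta A_k|<\infty$ a.s.\ by comparison with the convergent series $\sum_k(k-1)^{-2}$, so $\{A_n\}$ converges absolutely, with a deterministic bound on its tail.

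For the martingale part I would verify $L^2$-boundedness. Since the increments are orthogonal (for $j<k$, $\ex[D_jD_k]=\ex[D_j\ex_{k-1}(D_k)]=0$), we have $\ex[M_n^2]=\sum_{k=1}^n\ex[D_k^2]$. Using $|\gamma_k U_k|\leq c_uK_u/k$ from (i) and (ii) together with the bound on $\ex_{k-1}(\gamma_k U_k)$ just obtained, we get $|D_k|\leq c_uK_u/k+K_ec_u^2(k-1)^{-2}=\bigo(1/k)$, so $\ex[D_k^2]=\bigo(1/k^2)$ and $\sup_n\ex[M_n^2]\leq\sum_k\ex[D_k^2]<\infty$. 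By the $L^2$ martingale convergence theorem, $\{M_n\}$ converges a.s.\ (and in $L^2$). Combining the two pieces, $W_n=M_n+A_n$ converges almost surely.

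There is no serious obstacle here; the only point requiring care is that what is available is condition (iv) rather than the stronger martingale-difference requirement (\ref{krav}), so that $\{W_n\}$ itself need not be a martingale. The decomposition above is designed precisely to absorb the non-martingale part into the absolutely summable sequence $\{A_n\}$, after which the argument reduces to the classical square-integrable martingale convergence theorem.
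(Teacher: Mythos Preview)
Your proof is correct and follows essentially the same approach as the paper: decompose $W_n$ into the predictable compensator $A_n=\sum_k\ex_{k-1}(\gamma_kU_k)$, which is absolutely summable by (iv) and (i), and the $L^2$-bounded martingale $M_n=W_n-A_n$, which converges by the martingale convergence theorem. The only cosmetic difference is that the paper bounds $\ex D_k^2$ via $\ex(Y_k-\tilde Y_k)^2\leq\ex Y_k^2\leq c_u^2K_u^2/k^2$ rather than bounding $|D_k|$ pointwise as you do, but the substance is identical.
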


\begin{proof}
 Set $Y_k=\gamma_kU_k$ and $\tilde Y_k=\ex_{k-1}(\gamma_kU_k)$ and define the martingale 
$M_n=\sum_{1}^{n}(Y_k-\tilde Y_k)$. Then
\[ \ex M_n^2=\ex\left\{\sum_{1}^{n}(Y_k-\tilde Y_k)^2\right\}\leq 
 \sum_{1}^{n}\ex Y_k^2\leq \sum_{1}^{n}\frac{c_u^2K_u^2}{k^2}<\infty \]
so that $M_n$ is an $L^2$-martingale and thus convergent. Next, since
\[  \sum_{1}^\infty\left|\tilde Y_k\right|\leq \sum_{1}^\infty \frac{c_u^2K_e}{(k-1)^2}<\infty \]
we must  also have that $\sum_{k\geq 1}Y_k$ converges a.s.
\end{proof}

\begin{deff} Let
\begin{equation*}
 \acc=\bigcap_{n\geq 1} \overline{\{X_n,X_{n+1},\ldots\}}
\end{equation*}
be the set of accumulation points
of $\{X_n\}$. 
\end{deff}

\begin{lemma} \label{convergence}
Suppose that $f(x)<-\delta$ (or $f(x)>\delta$), for some $\delta>0$, whenever $x\in(a_0,b_0)$.
Then \[ \acc\cap(a_0,b_0)=\emptyset\quad\mbox{a.s.}  \] and
either $\limsup_n X_n\leq a_0$ or $\liminf_n X_n\geq b_0$.
\end{lemma}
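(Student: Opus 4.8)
The plan is to reduce the entire statement to the dichotomy and then treat that by contradiction. Since the set $\acc$ of accumulation points of the bounded sequence $\{X_n\}$ is contained in the interval $[\liminf_n X_n,\ \limsup_n X_n]$, it suffices to show that a.s.\ either $\limsup_n X_n\le a_0$ or $\liminf_n X_n\ge b_0$: the assertion $\acc\cap(a_0,b_0)=\emptyset$ then follows immediately, since in the first case $\acc\subseteq[0,a_0]$ and in the second $\acc\subseteq[b_0,1]$. I will handle only the case $f<-\delta$ on $(a_0,b_0)$, the case $f>\delta$ following by applying the result to $1-X_n$, which is again a stochastic approximation algorithm (with drift $-f(1-\cdot)$ and negated noise) satisfying (i)--(iv). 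Arguing by contradiction, I fix an outcome lying in the a.s.\ event of Lemma \ref{wn} on which $\{W_n\}$ converges, and on which $\limsup_n X_n>a_0$ while $\liminf_n X_n<b_0$; everything below is deterministic for this outcome. Convergence of $W_n$ lets me pick, for any prescribed $\epsilon>0$, a time $N$ with $|W_m-W_n|<\epsilon$ whenever $m>n\ge N$.

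The engine is an elementary estimate obtained by summing (\ref{defeq}): if $N\le n<m$ and $X_j\in(a_0,b_0)$ for all $n\le j\le m-1$, then $f(X_j)<-\delta$ on that range, so
\[ X_m-X_n=\sum_{j=n}^{m-1}\gamma_{j+1}f(X_j)+(W_m-W_n)<-\delta\sum_{j=n}^{m-1}\gamma_{j+1}+\epsilon. \]
Because $\gamma_{j+1}\ge c_l/(j+1)$, the sum $\sum_j\gamma_{j+1}$ diverges, so the right-hand side tends to $-\infty$ as $m\to\infty$ while the left-hand side stays bounded below by $-1$. Hence the process cannot remain inside $(a_0,b_0)$ from any time on. This already disposes of the degenerate case $\liminf_n X_n=\limsup_n X_n=:\ell$: there $X_n\to\ell$, and our standing assumption forces $\ell\in(a_0,b_0)$, so $X_j$ eventually lies in $(a_0,b_0)$ and the estimate gives the desired contradiction.

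It remains to treat $\liminf_n X_n<\limsup_n X_n$. Here I choose levels $\ell_1<\ell_2$ with $\max\{a_0,\liminf_n X_n\}<\ell_1<\ell_2<\min\{b_0,\limsup_n X_n\}$; this interval is nonempty precisely because $a_0<b_0$, $\liminf<b_0$, $a_0<\limsup$ and $\liminf<\limsup$, which forces $\max\{a_0,\liminf\}<\min\{b_0,\limsup\}$. Then $[\ell_1,\ell_2]\subset(a_0,b_0)$, while $X_n<\ell_1$ infinitely often and $X_n>\ell_2$ infinitely often, so $\{X_n\}$ completes infinitely many up-crossings of $[\ell_1,\ell_2]$. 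For a single up-crossing I take $v$ with $X_v\ge\ell_2$ and let $u<v$ be the last preceding time with $X_u\le\ell_1$, so that $X_j\in(\ell_1,\ell_2)\subset(a_0,b_0)$ for $u<j<v$. Applying the estimate above after splitting off the one boundary term $\gamma_{u+1}f(X_u)\le c_uK_f/(u+1)$ (where $X_u$ itself need not lie in $(a_0,b_0)$) and discarding the nonpositive interior drift sum yields $X_v-X_u<c_uK_f/(u+1)+\epsilon$. Since there are infinitely many up-crossings I may take $u\ge N$ arbitrarily large, and with $\epsilon$ fixed below $(\ell_2-\ell_1)/2$ this contradicts $X_v-X_u\ge\ell_2-\ell_1$.

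The step I expect to require the most care is the bookkeeping around the up-crossing: separating the interior indices, where the drift bound $f<-\delta$ genuinely applies, from the single endpoint contribution $\gamma_{u+1}f(X_u)$, and arranging the quantifiers so that the random noise-controlling time $N$ is fixed first and only afterwards is an up-crossing with large starting time selected. The rest is a routine combination of the divergence of $\sum\gamma_n$ (supplying the drift's unbounded push) against the a.s.\ convergence of $W_n$ (making the accumulated noise negligible).
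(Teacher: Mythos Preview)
Your argument is correct and rests on the same engine as the paper's proof---control the accumulated noise via Lemma~\ref{wn} and then let the strictly negative drift forbid upward crossings of any subinterval of $(a_0,b_0)$---but it is organised differently. The paper fixes a compact $[a,b]\subset(a_0,b_0)$, shows that after a large random time $N$ the process must drop below $a_0$ and can never afterwards climb back above $a$, and then exhausts $(a_0,b_0)$ by countably many such compacta to obtain $\acc\cap(a_0,b_0)=\emptyset$ before deducing the dichotomy as a corollary. You instead reduce everything to the dichotomy at the outset via $\acc\subseteq[\liminf_n X_n,\limsup_n X_n]$ and run a single up-crossing contradiction; this is slightly more economical, as it avoids the countable exhaustion step.

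One small point of bookkeeping: as written, taking $v$ to be merely ``some time with $X_v\ge\ell_2$'' and $u$ the last preceding time with $X_u\le\ell_1$ guarantees $X_j>\ell_1$ for $u<j<v$ but not $X_j<\ell_2$, so some $X_j$ might lie above $b_0$ where the bound $f<-\delta$ is unavailable. The standard fix is to take $v$ to be the \emph{first} time at or above $\ell_2$ following a visit below $\ell_1$, and then let $u$ be the last time at or below $\ell_1$ before $v$; with that choice your claimed inclusion $X_j\in(\ell_1,\ell_2)$ for $u<j<v$ does hold and the rest of the estimate goes through exactly as you wrote.
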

\begin{proof}
The proof follows that of Lemma 2.6 of \cite{Pem07}.

Let $[a,b]\subset(a_0,b_0)$ and let $\Delta=\min\{a-a_0,b_0-b\}$ be the smallest distance
from $[a,b]$ to a point outside $(a_0,b_0)$. Let $N>4c_uK_f/\Delta$ be a (random) number large enough 
so that
$n,m\geq N$ implies
\[ |W_n-W_m|<\Delta/4, \]
which by Lemma \ref{wn} is possible a.s.\ due to the a.s.\ convergence of $W_n$. Then we have
for any $n\geq N$
\begin{equation*}
 X_{n+1}-X_n=\gamma_{n+1}f(X_n)+W_{n+1}-W_n<\Delta/2,
\end{equation*}
so that the process after $N$ cannot immediately go from a point to 
the left of $a_0$ to a point on the right of $a$. 
Also, if $n\geq N$, $X_n\in(a_0,b]$ and $X_{n+1},\ldots,X_{n+k-1}\in(a_0,b_0)$
then
\begin{align*}
 X_{n+k}-X_n&=\sum_{j=n+1}^{n+k}\gamma_{j}f(X_{j-1})+W_{n+j}-W_n \\
&<-\delta\sum_{j=n+1}^{n+k}\gamma_{j}+\Delta/4.
\end{align*}
The last step shows that after $N$ the process cannot increase by more than
$\Delta/4$ while inside $(a_0,b_0)$, hence cannot escape out
to the right.  Moreover, since $\sum_{k>N}\gamma_k\to\infty$ a.s.,
we must have $X_{N+k^*}<a_0$ for some $k^*>0$. 

Now, once the process is below $a_0$ it will never reach above $a_0+\Delta/2$
in one step. Once inside $(a_0,b_0)$ it will never increase by more than
$\Delta/4$. Hence, it will never again reach above $a_0+3\Delta/4<a$. 
Obviously, we a.s.\ cannot have both $\liminf_n X_n\leq a$
\emph{and} $\limsup_n X_n\geq b$.

The first results follows from choosing $[a_k,b_k]\subset(a_0,b_0)$, such that \\
$\cup_k [a_k,b_k]=(a_0,b_0)$, so that 
\[ \pr\{ \acc\cap(a_0,b_0)\neq\emptyset\}\leq\sum_k \pr\{ X_n\in[a_k,b_k]\mbox{\ i.o.}\}=0. \]
The second results follows by an analogous calculation, yielding
\[ \pr\left(\{\liminf_{n\to\infty} X_n\leq a_0\}\cap \{\limsup_{n\to\infty} X_n\geq  b_0\}\right)=0, \] 
and the observation that
since we thus must have $\liminf_n X_n>a_0$ or $\limsup_n X_n<b_0$, we must in fact have
either a.s.\ 
\[ \liminf_{n\to\infty} X_n\geq b_0 \quad\mbox{or}\quad\limsup_{n\to\infty} X_n \leq a_0, \] 
since no accumulation points
exist in $(a_0,b_0)$ by the first result.

The case where $f(x)>\delta$ on $(a_0,b_0)$ is analogous.
\end{proof}
Next, we introduce the concept of attainability that we need now and again 
to rule out trivialities.
\begin{deff}\label{attain}
Call a subset $I$ \emph{attainable} if for every fixed $N\geq 0$ there exists an $n\geq N$ such that
\[ \pr(X_n\in I)>0.\] 
\end{deff}
Any ''reasonable'' stochastic approximation algorithm on $[0,1]$
should have $f(0)\geq0$ and $f(1)\leq 0$, otherwise it seems that the drift could push the processes 
out of $[0,1]$. The notion of attainability gives a sufficient condition to ensure
this. 
\begin{lemma}\label{fbra}
Assume that the drift function $f$ is continuous at the boundary points
$0$ and $1$.
If all neighborhoods of the origin are attainable, then $f(0)\geq 0$.
Similarly, if all neighborhoods of\, $1$ are attainable, then $f(1)\leq 0$.
\end{lemma}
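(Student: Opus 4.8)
The plan is to argue by contradiction: assuming $f(0)<0$, I will exhibit a neighborhood of the origin that fails to be attainable, contradicting the hypothesis. By continuity of $f$ at $0$, fix $\epsilon\in(0,1)$ and $\delta>0$ with $f(x)\le-\delta$ on $[0,\epsilon]$, and fix $\eta\in(0,\epsilon)$. The target is to show $\pr(X_n<\eta)=0$ for all large \emph{deterministic} $n$, so that $[0,\eta)$ is not attainable. Note that $f$ is only ever evaluated on $[0,\epsilon)\subset[0,1]$ in what follows, so no extension of $f$ is needed.

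The core is a hitting-time computation started at a time $n$ with $X_n<\eta$ (an event in $\fa_n$). Let $T=\inf\{k>n:X_k\ge\epsilon\}$. For each $m$, since $X_{T\wedge m}\ge0$ and $f(X_{k-1})\le-\delta$ for $n+1\le k\le T\wedge m$, applying $\ex_n$ to $X_{T\wedge m}=X_n+\sum_{k=n+1}^{T\wedge m}\gamma_kf(X_{k-1})+\sum_{k=n+1}^{T\wedge m}\gamma_kU_k$ yields $\delta\,\ex_n\big[\sum_{k=n+1}^{T\wedge m}\gamma_k\big]\le X_n+\kappa_n\le\eta+\kappa_n$, where $\kappa_n$ is the noise bound supplied below. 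Letting $m\to\infty$, monotone convergence and $\sum_k\gamma_k=\infty$ force $\sum_{k=n+1}^{T}\gamma_k<\infty$ a.s.\ on $\{X_n<\eta\}$, hence $T<\infty$ a.s.\ there. Intuitively: while the process stays in $[0,\epsilon)$ the drift drags the deterministic part to $-\infty$, whereas the accumulated noise stays bounded (Lemma \ref{wn}); being confined to $[0,1]$ it cannot leave downward, so it must climb out to $\epsilon$.

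The key estimate is control of the accumulated noise up to the random time. Write $\gamma_kU_k=\big(\gamma_kU_k-\ex_{k-1}[\gamma_kU_k]\big)+\ex_{k-1}[\gamma_kU_k]$. The first pieces form a martingale which, by conditions (i) and (ii), is $L^2$-bounded, so optional stopping at the \emph{bounded} time $T\wedge m$ gives them mean $0$; the second pieces are summably small by condition (iv), $\sum_{k>n}|\ex_{k-1}[\gamma_kU_k]|\le K_e\sum_{k>n}\gamma_{k-1}^2=:\kappa_n\to0$. Thus $\big|\ex_n\big[\sum_{k=n+1}^{T\wedge m}\gamma_kU_k\big]\big|\le\kappa_n$ uniformly in $m$, whence $\ex_n[X_{T\wedge m}]\le\eta+\kappa_n$. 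Since $T<\infty$ a.s.\ and $X\in[0,1]$, bounded convergence gives $\ex_n[X_{T\wedge m}]\to\ex_n[X_T]$, and $X_T\ge\epsilon$ forces $\epsilon\le\ex_n[X_T]\le\eta+\kappa_n$ on $\{X_n<\eta\}$. Hence $\epsilon-\eta\le\kappa_n$ must hold whenever $\pr(X_n<\eta)>0$; as $\kappa_n\to0$, any $n$ with $\kappa_n<\epsilon-\eta$ gives $\pr(X_n<\eta)=0$, so $[0,\eta)$ is not attainable, the desired contradiction.

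I expect the main obstacle to be precisely this noise control at the random time $T$, rather than any single-step estimate. A one-step bound is useless here: for large $n$ the step size $\gamma_n\sim1/n$ is tiny next to the fixed width $\eta$, so the process near height $\eta$ is in no danger of being pushed below $0$ in one move. The argument must instead aggregate the strictly negative drift over the entire excursion in $[0,\epsilon)$ and weigh the $\Theta(\epsilon)$ worth of noise needed to carry the process up to $\epsilon$ against a downward-pointing drift, against the fact that condition (iv) caps the \emph{expected} cumulative noise at order $\sum_{k>n}k^{-2}=o(1)$. The statement $f(1)\le0$ follows by the symmetric argument applied to $1-X_n$ near the endpoint $1$.
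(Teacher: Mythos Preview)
Your argument is correct, and it takes a genuinely different route from the paper's.

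The paper proceeds by first borrowing the second-moment trapping estimate from Case~2 of Theorem~\ref{stable}: assuming $f<0$ on $[0,\delta_*)$, it shows (via $\ex_n Y_k\le\ex_n Y_{k-1}+C/(k-1)^2$ with $Y_k=X_k^2$) that with positive probability the process never leaves $[0,\delta_*/2)$ after some late time. It then invokes the almost-sure convergence of $W_n=\sum\gamma_kU_k$ (Lemma~\ref{wn}) to bound the noise increments by $\delta_*/2$, so that on the trapping event the strictly negative drift forces $X_{N_W+k}\to-\infty$, contradicting $X\in[0,1]$.

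You instead run a first-moment exit argument: conditioning on $\{X_n<\eta\}$, you use $X\ge 0$ and the Doob decomposition of the noise to show the stopped drift sum has bounded expectation, hence the exit time $T$ from $[0,\epsilon)$ is a.s.\ finite; then the same decomposition bounds $\ex_n[X_T]\le\eta+\kappa_n$, which for large $n$ falls below the guaranteed $X_T\ge\epsilon$, so $\pr(X_n<\eta)=0$ eventually and attainability fails. This is more self-contained (no appeal to the Case~2 computation) and uses only first moments and optional stopping at bounded times, whereas the paper's proof reuses machinery already in place and exploits the pathwise Cauchy property of $W_n$. Both exploit the same tension---negative drift, summable noise bias, and the barrier at $0$---but yours packages it as ``must exit but cannot (in mean),'' while the paper's is ``may be trapped, and then must leave $[0,1]$.''
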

We postpone the proof of this as it will be included in the proof of Theorem \ref{stable}
on page \pageref{case2}.

\begin{lemma}\label{tozero}
Suppose $f$ is continuous and let $Q_f=\{x:f(x)=0\}$ the zeros  of $f$. 
Then \[ \pr\big\{\acc\subseteq Q_f\big\}=1. \] 
\end{lemma}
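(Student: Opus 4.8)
The plan is to show that any accumulation point $x^*$ of $\{X_n\}$ must satisfy $f(x^*)=0$, by a contrapositive argument: if $f(x^*)\neq 0$, then continuity of $f$ and Lemma \ref{convergence} together force $x^*$ to be excluded from $X_\infty$. Concretely, suppose $x^*\in[0,1]$ with $f(x^*)\neq 0$. Without loss of generality assume $f(x^*)>0$ (the case $f(x^*)<0$ is symmetric). Since $f$ is continuous, there is a $\delta>0$ and an open interval $(a_0,b_0)$ containing $x^*$ on which $f(x)>\delta$. Lemma \ref{convergence} then yields $X_\infty\cap(a_0,b_0)=\emptyset$ almost surely, so $x^*\notin X_\infty$ almost surely.

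The main step is to convert this pointwise statement into the global assertion $\pr\{X_\infty\subseteq Q_f\}=1$. The subtlety is that the null set produced by Lemma \ref{convergence} depends on the interval $(a_0,b_0)$, and there are uncountably many points $x^*$ with $f(x^*)\neq 0$, so I cannot simply take a union over all of them. The remedy is a countable cover. The complement $[0,1]\setminus Q_f=\{x:f(x)\neq 0\}$ is open in $[0,1]$ (by continuity of $f$), hence is a countable union of open intervals on each of which $f$ has a constant sign. On any compact subinterval of such a component, $|f|$ is bounded below by some $\delta>0$. I would therefore choose a countable family of intervals $\{(a_k,b_k)\}$, each contained in a sign-definite component of $\{f\neq 0\}$ with $f$ bounded away from $0$ on it, such that $\bigcup_k (a_k,b_k)=\{x:f(x)\neq 0\}$. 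Lemma \ref{convergence} gives $\pr\{X_\infty\cap(a_k,b_k)\neq\emptyset\}=0$ for each $k$, and then
\[
\pr\{X_\infty\not\subseteq Q_f\}
=\pr\Big\{X_\infty\cap\bigcup_k(a_k,b_k)\neq\emptyset\Big\}
\leq\sum_k\pr\{X_\infty\cap(a_k,b_k)\neq\emptyset\}=0,
\]
which is the claim.

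The hard part will be the bookkeeping of the countable cover: I must ensure each chosen interval sits strictly inside a single sign-definite component so that the lower bound $\delta_k>0$ genuinely exists (continuity alone gives $f\neq 0$, but $\inf|f|$ could be $0$ if an endpoint of the component is approached). Exhausting each open component by a countable increasing union of compact subintervals, on each of which $f$ attains a positive minimum of $|f|$, handles this cleanly; the endpoints $a_k,b_k$ of the cover never need to belong to $Q_f$ themselves. Everything else is a routine application of continuity and the already-established Lemma \ref{convergence}, together with countable subadditivity of $\pr$.
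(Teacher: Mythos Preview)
Your core argument---cover $\{f\neq 0\}$ by countably many intervals on which $|f|$ is bounded below, apply Lemma~\ref{convergence} to each, and use countable subadditivity---is exactly the paper's approach (the paper phrases the cover as $A_n=\{x\in(0,1):|f(x)|>1/n\}$, but this is the same idea).

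There is, however, one point you gloss over that the paper treats explicitly: the boundary $\{0,1\}$. Your argument ``there is an open interval $(a_0,b_0)$ containing $x^*$ on which $f(x)>\delta$'' breaks down when $x^*\in\{0,1\}$, since $f$ is only defined on $[0,1]$ and Lemma~\ref{convergence} is stated for genuine open intervals on which $f$ is defined throughout. More substantively, the proof of Lemma~\ref{convergence} relies on the process being able to exit the interval on the side toward which the drift pushes it; if, say, $f(0)<0$, the drift pushes toward and past $0$, but the process is confined to $[0,1]$, so the argument that it eventually leaves $(a_0,b_0)$ below does not go through. The paper closes this gap by invoking Lemma~\ref{fbra}: either neighborhoods of $0$ are attainable, in which case $f(0)\geq 0$ (so the bad sign configuration cannot occur), or they are not, in which case the process is eventually bounded away from $0$. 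You should add a sentence handling the endpoints along these lines.
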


\begin{proof}
The continuity of $f$ 
makes the sets \[ A_n=\{x\in(0,1): f(x)>1/n\;\mbox{or}\;f(x)<-1/n\}\] open. Hence,
each $A_n$ is a countable union of open intervals, each on which $f$ is $>1/n$ or $<-1/n$
and hence where no accumulation points may exist. 

The only ''loose end'' here is the boundary. Suppose e.g.\ that $f<0$ close to zero (but
a priori not at zero). Then it seems
that the process might be pushed down to zero (or beyond) even though $0\notin Q_f$. This
is however ruled out by Lemma \ref{fbra}, since if neighborhoods of the origin are
attainable then $f(0)\geq 0$ and if they are not, then the process eventually is bounded
away from the origin. Similarly we can not have $f>0$ close to  $x=1$ and attainability
of this boundary point simultaneously, unless $f(1)=0$.

It is clear that if $f>0$ close to the origin then the process will eventually be bounded
away from there (and similarly if $f<0$ close to $x=1$ then the process will be bounded
away from $1$).
\end{proof}

\begin{theorem}\label{main}
 If $f$ is continuous then $\displaystyle \lim_{n\to\infty} X_n$ exists a.s.\ and is in $Q_f$.
\end{theorem}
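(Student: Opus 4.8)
The plan is to combine Lemma \ref{tozero}, which already gives $\acc\subseteq Q_f$, with a connectedness argument for $\acc$ and the almost sure convergence of $W_n$ supplied by Lemma \ref{wn}. The only thing left to establish is that $\acc$ is a single point; once this is done, $\lim_n X_n$ exists and lies in $Q_f$ directly by Lemma \ref{tozero}.

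First I would record that the increments vanish: by Definition \ref{def} we have $|X_{n+1}-X_n|=\gamma_{n+1}|f(X_n)+U_{n+1}|\leq K_\Delta/(n+1)\to 0$. A bounded real sequence whose consecutive increments tend to $0$ has a closed and \emph{connected} set of accumulation points; indeed, if $\acc$ had a gap $(\alpha,\beta)$ with $\alpha,\beta\in\acc$, the process would be forced to step across $(\alpha,\beta)$ infinitely often in steps shrinking to $0$, producing an accumulation point inside the gap. Hence $\acc=[\alpha,\beta]$ is a (possibly degenerate) closed interval, and by Lemma \ref{tozero} we have $f\equiv 0$ on $[\alpha,\beta]$.

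It then remains to exclude $\alpha<\beta$. Suppose $\alpha<\beta$ and fix $a,b$ with $\alpha<a<b<\beta$. Since $\liminf_n X_n=\alpha<a$ and $\limsup_n X_n=\beta>b$, the process performs infinitely many up-crossings of $[a,b]$: there are times $n_k<\tau_k\to\infty$ with $X_{n_k}\leq a$, $X_{\tau_k}\geq b$, and $X_j\in(a,b)$ for $n_k<j<\tau_k$. Summing the defining relation (\ref{defeq}),
\[ X_{\tau_k}-X_{n_k}=\sum_{j=n_k+1}^{\tau_k}\gamma_j f(X_{j-1})+(W_{\tau_k}-W_{n_k}). \]
For $k$ large every interior value $X_{n_k+1},\ldots,X_{\tau_k-1}$ lies in $(a,b)\subset(\alpha,\beta)$, where $f=0$, so the whole sum collapses to the single boundary term $\gamma_{n_k+1}f(X_{n_k})$, which is $\bigo(1/n_k)\to 0$.

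The hard part — and the reason the classical ``$f$ bounded away from $0$'' reasoning of Lemma \ref{convergence} cannot be invoked — is precisely this scenario in which $f$ vanishes on an entire interval, so that the drift exerts no restoring force inside $(\alpha,\beta)$. The resolution is that inside such an interval the process is driven purely by the noise accumulated in $W_n$, which converges by Lemma \ref{wn}. Concretely $W_{\tau_k}-W_{n_k}\to 0$ as $k\to\infty$, so the displayed identity yields $b-a\leq X_{\tau_k}-X_{n_k}\to 0$, contradicting $b>a$. Therefore $\alpha=\beta$, the set $\acc$ is a single point, and $\lim_{n\to\infty}X_n$ exists and belongs to $Q_f$.
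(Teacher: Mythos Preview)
Your proof is correct, and the route is a little different from the paper's. The paper fixes rationals $p<q$ and splits into two cases according to whether $p$ and $q$ lie in the same connected component of $Q_f$: Case~1 is dispatched by Lemma~\ref{convergence}, and Case~2 (where $f\equiv 0$ on $[p,q]$) is handled by a stopping-time argument bounding $\sup_{k\geq n}X_k\leq p+\epsilon$. You instead first observe, from $|\Delta X_n|\to 0$, that $\acc$ is necessarily a closed \emph{interval}, which together with Lemma~\ref{tozero} forces $f\equiv 0$ on all of $\acc$; this collapses the paper's case distinction and places you directly in the Case~2 situation. Your up-crossing contradiction is then a somewhat tighter version of the paper's Case~2 argument: both rest on the same idea that on an interval where the drift vanishes the process is essentially $W_n$, which is Cauchy by Lemma~\ref{wn} and therefore cannot oscillate by a fixed amount $b-a>0$ infinitely often. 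The connectedness shortcut buys you a cleaner structure; the paper's version is a bit more self-contained in that it does not appeal to the general fact about accumulation sets of sequences with vanishing increments.
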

\begin{proof}
If $\lim_n X_n$ does not exist, we can find two different rational numbers
in the open interval $(\liminf_n X_n, \limsup_n X_n)$.

Let $p<q$ be two arbitrary different rational numbers. If we can show that
\begin{equation*} 
\pr\left(\{\liminf_{n\to\infty} X_n\leq p\}\cap \{\limsup_{n\to\infty} X_n\geq q\} \right)=0, 
\end{equation*}
the existence of the limit will be established and the 
claim of the theorem will follow from Lemma \ref{tozero}.

To do this we need to distinguish between whether or not $p$ and $q$ are in the
same connected component of $Q_f$.

\vspace{0.3cm}
\noindent\textbf{Case 1:} $p$ and $q$ are in not in the same connected component of $Q_f$. \\
Since $Q_f$ is closed and $f$ continuous, there must exist $(a,b)\subseteq (p,q)\cap Q_f^c$ such that 
$f$ is non-zero and of the same sign for all $x\in(a,b)$. By Lemma \ref{convergence}
it is impossible to have $\liminf_n X_n\leq a$ and $\limsup_n X_n\geq b$.

\vspace{0.3cm}
\noindent\textbf{Case 2:} $p$ and $q$ are in the same connected component of $Q_f$. \\
Assume that $\liminf_n X_n\leq p$ and fix an arbitrary $\epsilon$ in such
a way that $0<\epsilon<q-p$. We aim to show
that $\limsup_n X_n\leq p+\epsilon$. 

Recall the notation $W_n=\sum_1^n \gamma_kU_k$. We know from
Lemma \ref{wn} that $W_n$ converges a.s., so for some stochastic $N>2K_\Delta/\epsilon$,
we have that $n,m\geq N$ implies $|W_n-W_m|<\epsilon/2$. By assumption there
is some stochastic $n\geq N$ such that $X_n-p<\epsilon/2$.

Set 
\begin{align*} 
\tau_1=\inf\{k\geq n:X_k\geq p\}\quad\mbox{and}\quad
\sigma_{1}=\inf\{k>\tau_1: X_k<p\}
\end{align*}
and define, for $n\geq 1$,
\begin{equation*} 
\tau_{n+1}=\inf\{k>\sigma_n:X_k\geq p\}\quad\mbox{and}\quad\sigma_{n+1}=\inf\{k>\tau_n: X_k< p\}.
\end{equation*}
Now, for all $k$,
\begin{align}\label{apan1} 
X_{\tau_k}&=X_{\tau_k-1}+\Delta X_{\tau_k-1}\leq p+K_\Delta/\tau_k<p+\epsilon/2.
\end{align}
Note that $f(x)=0$ when $x\in[p,q]$. Hence, if $\tau_k+j-1$
is a time before the exit time of the interval $[p,q]$ then
\[ X_{\tau_k+j}=X_{\tau_k}+\sum_{\tau_k+1}^{\tau_k+j}\gamma_if(X_{i-1})+W_{\tau_k+j}-W_{\tau_k}
 =X_{\tau_k}+W_{\tau_k+j}-W_{\tau_k}. \]
As 
\begin{equation}\label{apan2}|W_{\tau_k+j}-W_{\tau_k}|<\epsilon/2 \end{equation} 
the process will never be able to reach
above $p+\epsilon$ before $\sigma_{k+1}$. Since (\ref{apan1})
and (\ref{apan2}) is true for all $k$, we must have
$\sup_{k\geq n} X_k \leq p+\epsilon$.  
\end{proof}

\subsection{Categorizing equilibrium points}
Any point $x\in Q_f=\{x:f(x)=0\}$ is called an equilibrium point, or zero, of $f$.
In this paper we shall use the following terminology:
\begin{itemize}
\item  A point $p\in Q_f$ is called \emph{unstable} if there exists a neighborhood
$\mathscr N_p$ of $p$ such that $f(x)(x-p)\geq 0$ whenever $x\in\mathscr N_p$.

This means that $f(x)\geq 0$ when $x$ is just above $p$ and $f(x)\leq 0$ when
$x$ is just below $p$, hence the drift is locally pushing the process away from 
$p$ (or not pushing at all). 

If $f(x)(x-p)>0$ when $x\in\mathscr N_p\backslash\{p\}$ we  call $p$
\emph{strictly unstable}. If $f$ is differentiable then $f'(p)>0$ is sufficient
to determine that $p\in Q_f$ is strictly unstable.

\item A point will be called \emph{stable} if there exists a neighborhood
$\mathscr N_p$ of $p$ such that $f(x)(x-p)<0$ whenever 
$x\in\mathscr N_p\backslash\{p\}$. If $f$ is differentiable then $f'(p)<0$
is sufficient to determine that $p\in Q_f$ is stable.

Locally, the drift pushes the
process towards $p$ from both directions. 

\item A point $p\in Q_f\cap(0,1)$ is called a \emph{touchpoint} if there
exists a neighborhood $\mathscr N_p$ of $p$ such that either $f(x)>0$ 
for all $x\in \mathscr N_p\backslash\{p\}$ or 
$f(x)<0$ for all $x\in \mathscr N_p\backslash\{p\}$. 
If $f$ is twice differentiable then $f(p)=f'(p)=0$ and $f''(p)\neq 0$ is sufficient
to determine that $p\in(0,1)$ is a touchpoint.

A touchpoint may be thought of as having one stable and one strictly unstable
side. Note that our definition does not allow touchpoints on the boundary.
\end{itemize}

\subsection{Nonconvergence}
In this section we narrow down the set of limit points of the process by excluding certain 
unstable points. 

\subsubsection[Unstable points]{Unstable points with non-vanishing error terms}
Here we exclude the unstable zeros of $f$ as possible limit points,
given that the error terms do not vanish at these points.
For our applications below this is applicable to zeros of $f$ in $(0,1)$ as the noise
does vanish at the boundary $\{0,1\}$, a problem addressed in the next section.

Heuristically, the process $\{X_n\}$ may arrive at an unstable point $p\in(0,1)$
by ``accident''. To ensure that it does not stay there, we need to know that there
is enough noise to push the process out into the drift leading away from $p$.

The main result here, Theorem \ref{Pem} below, is
an adaptation of Theorem 3.5 of \cite{Pem88}, a sketch of which can
be found in \cite{Pem07} and a corresponding multidimensional result
in \cite{Pem90},  whereby condition
(\ref{krav}) is replaced by (iv). 
For results on nonconvergence to more general unstable sets in the 
multidimensional case the reader is referred to section 9 of
\cite{Ben99} and references there.

To begin with we mention a result which will be used.
\begin{lemma} \label{con}
Let $A\in\fa_\infty=\sigma(\cup_n\fa_n)$ and suppose there is some integer $N$ 
and a real number $0<a\leq 1$ such that
$n>N$ implies $\pr(A|\fa_n)\geq a$. Then $\pr(A)=1$.
\end{lemma}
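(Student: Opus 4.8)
The plan is to recognize this as a direct instance of L\'evy's zero-one law (the upward martingale convergence theorem). The key object is the sequence $\pr(A\mid\fa_n)=\ex[\mathbf 1_A\mid\fa_n]$. Since $A\in\fa_\infty=\sigma(\cup_n\fa_n)$, the indicator $\mathbf 1_A$ is $\fa_\infty$-measurable, and the idea is that the conditional probabilities $\pr(A\mid\fa_n)$ must converge to $\mathbf 1_A$ itself; a pointwise lower bound of $a>0$ on the approximants then forces the limit to equal $1$ almost surely.

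Concretely, I would first observe that $\{\pr(A\mid\fa_n)\}_{n\geq 0}$ is a martingale with respect to $\{\fa_n\}$: by the tower property, $\ex[\,\ex[\mathbf 1_A\mid\fa_{n+1}]\mid\fa_n]=\ex[\mathbf 1_A\mid\fa_n]$. This martingale is bounded (its values lie in $[0,1]$), hence uniformly integrable, so by Doob's convergence theorem it converges almost surely and in $L^1$ to some limit $\xi$. The next step is to identify the limit as $\xi=\mathbf 1_A$ almost surely. The $L^1$ limit of $\ex[\mathbf 1_A\mid\fa_n]$ is $\ex[\mathbf 1_A\mid\fa_\infty]$, and because $\mathbf 1_A$ is already $\fa_\infty$-measurable this conditional expectation is just $\mathbf 1_A$; since the almost sure limit must agree with the $L^1$ limit, we get $\xi=\mathbf 1_A$ a.s.

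Finally, I would pass the hypothesis to the limit. For every $n>N$ we have $\pr(A\mid\fa_n)\geq a$, so letting $n\to\infty$ along the almost sure convergence gives $\mathbf 1_A=\xi\geq a>0$ almost surely. But $\mathbf 1_A$ takes only the values $0$ and $1$, so the strict positivity $\mathbf 1_A\geq a>0$ forces $\mathbf 1_A=1$ a.s., i.e.\ $\pr(A)=1$.

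I do not expect a genuine obstacle here; the argument is short once the right tool is named. The one point requiring care is that the convergence of $\pr(A\mid\fa_n)$ to $\mathbf 1_A$ must be almost sure, not merely in $L^1$, because it is the pointwise lower bound $a$ on the pre-limit terms that we need to transfer to the limiting random variable. A purely $L^1$ statement would not by itself let us conclude that $\mathbf 1_A\geq a$ almost surely.
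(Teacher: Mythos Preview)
Your proposal is correct and follows essentially the same route as the paper: the paper simply notes that $\pr(A\mid\fa_n)=\ex_n(\mathbf 1_A)$ is an a.s.\ convergent martingale with limit $\ex(\mathbf 1_A\mid\fa_\infty)=\mathbf 1_A$ (citing L\'evy's theorem in Billingsley), and since this limit is bounded away from zero it must equal $1$. Your write-up is more explicit about the uniform integrability and the need for a.s.\ (rather than merely $L^1$) convergence, but the argument is the same.
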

\begin{proof}
 The sequence $\pr(A|\fa_n)=\ex_n(\ind A)$ is an a.s.\ convergent martingale and $\lim_n \ex_n(A)=\ex(\ind A|\fa_\infty)=
\ind A$ a.s., see Th.\ 35.6 of \cite{Bil95}. If this variable is bounded away from zero it must be 1.
\end{proof}
Also, the following will prove to be useful.
\begin{lemma}\label{zwetc}
Let $N\geq 0$ be an integer and $\tau$ be a stopping time with respect
to the filtration in Definition \ref{def}, such
that $\tau\geq N$ a.s. Let $A\in\fa_\infty, B=A^c$,
\begin{align*}
Z_k= Z_k(N,\tau)&=[\ex_{k-1}\Delta X_{k-1}-\Delta X_{k-1}]\ind{\{N<k\leq \tau\}},\quad\mbox{and}\\
W_{m} &= \sum_{k=N+1}^m Z_k.
\end{align*}
Suppose that on $A$ we have $W_{\tau}>0$ or that we on $A$ have
$W_\tau<0$,
then 
\begin{equation}\label{hilfen}
\ex_N^2[W_{\tau}|A]\frac{\pr_N(A)}{\pr_N(B)}\leq \ex_NW_{\tau}^2
\leq \frac{K_\Delta^2}{N}.
\end{equation}
\end{lemma}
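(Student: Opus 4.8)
The plan is to recognize $\{W_m\}_{m\geq N}$ as a martingale and then read off the two inequalities from, respectively, an $L^2$-bound on its increments (the right-hand estimate) and an elementary variance decomposition over the partition $\{A,B\}$ (the left-hand estimate). First I would verify that $\{Z_k\}$ is a martingale difference sequence relative to $\{\fa_k\}$. The key point is the measurability of the truncation: since $\tau$ is a stopping time, $\{k\leq\tau\}=\{\tau\leq k-1\}^c\in\fa_{k-1}$, so $\ind{\{N<k\leq\tau\}}$ is $\fa_{k-1}$-measurable and pulls out of $\ex_{k-1}$. The bracket $\ex_{k-1}\Delta X_{k-1}-\Delta X_{k-1}$ has conditional mean zero by construction, hence $\ex_{k-1}Z_k=0$, and $W_m$ is a martingale with $W_N=0$. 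Because $Z_k$ vanishes for $k>\tau$, the quantity $W_\tau$ coincides with the terminal value $W_\infty=\sum_{k>N}Z_k$ regardless of whether $\tau<\infty$.

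For the right-hand inequality I would bound the conditional second moments. From (\ref{defeq}) and conditions (i)--(iii), $|\Delta X_{k-1}|=\gamma_k|f(X_{k-1})+U_k|\leq (c_u/k)(K_f+K_u)=K_\Delta/k$. Since $Z_k$ is (up to the indicator) the centering of $\Delta X_{k-1}$, its conditional variance does not exceed its conditional second moment, so $\ex_{k-1}Z_k^2\leq \ind{\{N<k\leq\tau\}}\,\ex_{k-1}(\Delta X_{k-1})^2\leq \ind{\{N<k\leq\tau\}}\,K_\Delta^2/k^2$. Orthogonality of the martingale increments (for $j<k$, $\ex_N[Z_jZ_k]=\ex_N[Z_j\,\ex_{k-1}Z_k]=0$) together with $L^2$-convergence of the partial sums then yields
\[ \ex_N W_\tau^2=\sum_{k>N}\ex_N Z_k^2\leq K_\Delta^2\sum_{k>N}k^{-2}\leq \frac{K_\Delta^2}{N}, \]
using $\sum_{k>N}k^{-2}\leq\int_N^\infty x^{-2}\,dx=1/N$.

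For the left-hand inequality I would first note $\ex_N W_\tau=0$, since the martingale starts at $0$ at time $N$ and is $L^2$-bounded. Writing $\mu_A=\ex_N[W_\tau|A]$ and $\mu_B=\ex_N[W_\tau|B]$, this gives $\pr_N(A)\mu_A+\pr_N(B)\mu_B=0$, while conditional Cauchy--Schwarz gives $\ex_N[W_\tau^2\ind A]\geq \pr_N(A)\mu_A^2$ and likewise for $B$. Adding these, substituting $\pr_N(B)\mu_B=-\pr_N(A)\mu_A$, and using $\pr_N(A)+\pr_N(B)=1$ collapses the bound to $\ex_N W_\tau^2\geq \mu_A^2\,\pr_N(A)/\pr_N(B)$, which is the claim. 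The sign hypothesis on $A$ enters only to make this meaningful: if, say, $W_\tau>0$ on $A$, then $\ex_N W_\tau=0$ forces $W_\tau<0$ on a subset of $B$ of positive conditional probability, so $\pr_N(B)>0$ and the ratio $\pr_N(A)/\pr_N(B)$ is finite.

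The hard part will be the bookkeeping at the stopping time rather than any deep estimate: one must confirm the $\fa_{k-1}$-measurability of $\ind{\{N<k\leq\tau\}}$ so that the martingale-difference property survives truncation, and check that $W_\tau$ genuinely equals the $L^2$-limit $W_\infty$ so that $\ex_N W_\tau=0$ and the orthogonality expansion are legitimate. Once these points are settled, both inequalities are short: the right one is the standard $\sum k^{-2}$ tail estimate, and the left one is a two-cell Cauchy--Schwarz computation pinned down by the zero-mean constraint.
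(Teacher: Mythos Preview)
Your proposal is correct and follows essentially the same route as the paper: establish that $\{W_m\}$ is an $L^2$-bounded martingale (so $\ex_N W_\tau=0$ and $\ex_N W_\tau^2\leq K_\Delta^2/N$ via $\sum_{k>N}k^{-2}$), then decompose over the partition $\{A,B\}$ and use Jensen/Cauchy--Schwarz together with the zero-mean relation $\pr_N(A)\mu_A+\pr_N(B)\mu_B=0$ to get the left-hand bound, with the sign hypothesis on $A$ guaranteeing $\pr_N(B)>0$. If anything, your write-up is slightly more careful than the paper's in spelling out the $\fa_{k-1}$-measurability of $\ind{\{N<k\leq\tau\}}$ and the identification $W_\tau=W_\infty$.
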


\begin{proof}
 First, we note that for any $m>N$
\[ \ex_N W_{m}^2\leq  \sum_{k=N+1}^\infty\ex_N\big[(\Delta X_{k-1})^2\big]\leq \sum_{k=N+1}^\infty 
\frac{K_\Delta^2}{k^2}\leq \frac{K_\Delta^2}{N}<\infty, \]
so that $W_{m}$ is an $L^2$-martingale and hence a.s.\ convergent. Due to the  assumption
that on $A$ we have $W_\tau$ strictly positive, or strictly negative, we must have $\pr_N(A)<1$, 
otherwise we would have $0\neq \ex_N W_\infty=\ex_N W_\tau$.
In particular, this assumption means that $\pr_N(B)>0$ so that we can make
the following calculation
\begin{align*}
 0 = \ex_N W_{\infty}=\ex_N[W_{\tau}]&=\ex_N[W_{\tau}|A]\pr_N(A)+\ex_N[W_{\tau}|B]\pr_N(B)  \\ \Longleftrightarrow\quad
-\ex_N[W_{\tau}|B] &=\ex_N[W_{\tau}|A]\frac{\pr_N(A)}{\pr_N(B)} \\ \Longrightarrow\quad
\ex_N^2[W_{\tau}|B] &=\ex_N^2[W_{\tau}|A]\left[\frac{\pr_N(A)}{\pr_N(B)}\right]^2.
\end{align*}
 Next, since $\ex X^2\geq \ex^2 X$ is true for any random variable $X$,
\begin{align*}
 \ex_N[W_{\tau}^2] &=\ex_N[W_{\tau}^2|A]\pr_N(A)+\ex_N[W_{\tau}^2|B]\pr_N(B) \\
&\geq \ex_N^2[W_{\tau}|A]\pr_N(A)+\ex_N^2[W_{\tau}|A]\frac{\pr_N^2(A)}{\pr_N(B)} \\
&=\ex_N^2[W_{\tau}|A]\pr_N(A)\left(1+\frac{\pr_N(A)}{\pr_N(B)}\right) \\
&=\ex_N^2[W_{\tau}|A]\frac{\pr_N(A)}{\pr_N(B)}. 
\end{align*}
\end{proof}

\begin{theorem} \label{Pem}
Assume that there exist an unstable point $p$ in $Q_f$, 
i.e.\ such that $f(x)(x-p)\geq 0$ locally, and that
\begin{equation}\label{v}
 \ex_n U_{n+1}^2\geq K_L
\end{equation}
holds, for some $K_L>0$, whenever $X_n$ is close to $p$. Then 
\[ \pr\{X_n\to p\}=0. \]
\end{theorem}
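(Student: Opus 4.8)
Suppose, for contradiction, that $\pr\{X_n\to p\}>0$, and write $A=\{X_n\to p\}$. Fix a neighbourhood $\mathscr N_p=(p-\rho,p+\rho)\subseteq(0,1)$ on which both the instability $f(x)(x-p)\geq 0$ and the noise bound (\ref{v}) hold. Since $\ind{A}$ is $\fa_\infty$-measurable, the martingale convergence used in Lemma \ref{con} gives $\pr_n(A)\to\ind{A}$ a.s.; in particular, on the positive-probability event $A$ we have $\pr_n(A)\to 1$ while eventually $X_n\in\mathscr N_p$. The plan is to contradict this by establishing a uniform conditional non-convergence bound: there is a fixed $a>0$ such that $\pr_n(A)\leq 1-a$ whenever $n$ is large and $X_n$ sits at the natural fluctuation distance (of order $n^{-1/2}$) from $p$.

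The main tool is Lemma \ref{zwetc}. Starting from a deterministic time $N$, let $\tau$ be the exit time of $\mathscr N_p$, so that $\tau=\infty$ on $A$ and $W_\tau=W_\infty$ is the accumulated centred noise. On $A$ one has the decomposition
\begin{equation*}
X_\infty-X_N=\sum_{k>N}\gamma_kf(X_{k-1})+\sum_{k>N}\ex_{k-1}(\gamma_kU_k)-W_\infty .
\end{equation*}
Here the left-hand side equals $p-X_N$, the first sum is pushed \emph{away} from $p$ by the sign condition, and the second is $\bigo(N^{-1})$ by (iv) since $|\ex_{k-1}(\gamma_kU_k)|\leq K_e\gamma_{k-1}^2$. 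Splitting $A$ according to the side of $p$ on which $\{X_n\}$ eventually remains, these observations force the noise $W_\infty$ to counteract the outward drift systematically, so that on each part $W_\tau$ carries the fixed sign of $X_N-p$ (modulo the $\bigo(N^{-1})$ correction). This is exactly the hypothesis needed to invoke Lemma \ref{zwetc} with this $A$ and $B=A^c$.

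The heart of the argument is to convert the inequality of Lemma \ref{zwetc},
\begin{equation*}
\ex_N^2[W_\tau\mid A]\,\frac{\pr_N(A)}{\pr_N(B)}\leq \ex_N W_\tau^2\leq \frac{K_\Delta^2}{N},
\end{equation*}
into an upper bound on $\pr_N(A)$ that is bounded away from $1$. I would show that the conditional mean of $W_\tau$ on $A$ is a definite fraction of its root mean square, i.e. $\ex_N^2[W_\tau\mid A]\geq c\,\ex_N W_\tau^2$ for a constant $c>0$ independent of $N$; the displayed inequality then gives $\pr_N(A)/\pr_N(B)\leq 1/c$ and hence $\pr_N(A)\leq (1+c)^{-1}<1$. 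The two ingredients are the upper bound $\ex_N W_\tau^2\leq K_\Delta^2/N$ from the lemma and a matching lower bound $\ex_N[W_\tau\mid A]\gtrsim N^{-1/2}$, the latter coming from $W_\infty=(X_N-p)+(\text{drift of the same sign})+\bigo(N^{-1})$ once one starts from an $\fa_N$-event on which $|X_N-p|$ is of the fluctuation order $N^{-1/2}$. The role of the non-vanishing noise (\ref{v}) is precisely to guarantee (via the lower bound $\Var_{k-1}(U_k)\geq K_L-\bigo(k^{-2})$ and orthogonality of the increments of $W$) that on $A$ the process is at distance of order $n^{-1/2}$ from $p$ for infinitely many $n$.

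Feeding this back into the zero--one law finishes the proof: at each of the infinitely many times $n$ (on $A$) where $|X_n-p|\gtrsim n^{-1/2}$ the bound $\pr_n(A)\leq(1+c)^{-1}<1$ applies, contradicting $\pr_n(A)\to1$ on $A$; hence $\pr\{X_n\to p\}=0$. The step I expect to be the real obstacle is the matching lower bound $\ex_N[W_\tau\mid A]\gtrsim N^{-1/2}$, equivalently the ratio inequality $\ex_N^2[W_\tau\mid A]\geq c\,\ex_N W_\tau^2$: here one must simultaneously rule out the degenerate case that $X_N$ is anomalously close to $p$, absorb the non-martingale correction permitted by (iv), and control the two-sided oscillation of $\{X_n\}$ about $p$ so that the drift contribution to $W_\infty$ retains a definite sign. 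The remaining estimates ($L^2$-boundedness and orthogonality of the increments of $W$, and the $\bigo(N^{-1})$ control of the correction term) are routine given Lemmas \ref{wn}, \ref{con} and \ref{zwetc}.
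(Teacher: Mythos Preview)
Your overall architecture---combine Lemma \ref{con} with Lemma \ref{zwetc} and the $n^{-1/2}$ fluctuation scale to manufacture a uniform bound $\pr_n(A)\leq 1-a$ at infinitely many times on $A$---is exactly the strategy the paper follows. But two of the steps you label as ingredients are genuine gaps, not routine estimates.

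First, the assertion that condition (\ref{v}) forces $|X_n-p|\gtrsim n^{-1/2}$ for infinitely many $n$ on $A$ needs proof; it is the paper's Step~1. There one tracks $G_N(m)=\ex_N[(X_{\tau\wedge m}-p)^2]$, where $\tau$ is the first exit from $\{|x-p|\leq\beta/\sqrt N\}$, uses (\ref{v}) to get $\ex_{m-1}(\Delta X_{m-1})^2\gtrsim m^{-2}$, and compares the resulting lower bound $G_N(\infty)\gtrsim \pr_N(\tau=\infty)/N$ with the trivial upper bound $G_N(\infty)\leq 4\beta^2/N$ to force $\pr_N(\tau=\infty)\leq 1/2$ for small $\beta$. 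This does not fall out of $L^2$-boundedness or orthogonality of the increments of $W$ alone.

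Second, and this is the real obstruction you yourself flag, the sign hypothesis of Lemma \ref{zwetc} cannot be verified for the event $A=\{X_n\to p\}$. You write $W_\infty=(X_N-p)+\sum_{k>N}\ex_{k-1}\Delta X_{k-1}$ and want the drift sum to share the sign of $X_N-p$. But instability only gives $f(X_{k-1})(X_{k-1}-p)\geq 0$, so the drift term has the sign of $X_N-p$ only while the process remains on the same side of $p$. On $A$ the process may cross $p$ (possibly infinitely often) on its way in, and the drift contributions from the two sides can cancel; ``splitting $A$ by the side on which $\{X_n\}$ eventually remains'' does not help, since convergence to $p$ entails no eventual side, and in any case neither the split nor the sign of the drift sum is $\fa_N$-measurable. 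The paper avoids this entirely by \emph{not} applying Lemma \ref{zwetc} to $A$. After the escape time $\tau$ of Step~1 (say $X_\tau>p+\beta/\sqrt N$), it sets $\hat\tau=\inf\{k\geq\tau:X_k<p+\beta/(2\sqrt N)\}$ and applies the lemma to $\{\hat\tau<\infty\}$. On that event the process has stayed above $p+\beta/(2\sqrt N)>p$ throughout $[\tau,\hat\tau]$, so every drift term is nonnegative and one gets $W_{\hat\tau}\geq \beta/(4\sqrt N)$ cleanly. Lemma \ref{zwetc} then yields $\pr_\tau(\hat\tau=\infty)\geq a>0$ with $a$ independent of $N$, and Lemma \ref{con} finishes. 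In short, the fix for your obstacle is to trade the intractable event $\{X_n\to p\}$ for the one-sided escape event, on which the drift sign is forced.
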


\begin{rem}\label{local}
The local assumptions $f(x)(x-p)\geq 0$ and (\ref{v}) can without loss of generality be assumed, in the
proof, to hold globally. Assume that the theorem is proved with global assumptions
but that $f(x)(x-p)\geq 0$ and (\ref{v}) are only satisfied when $X_n$ is in a neighborhood
$\mathscr N_p$ of $p$. 
Couple the process $\{X_n\}$ after a late time $N$
to another process $\{Y_n\}$, such that $Y_N=X_N$ and
\[ \Delta Y_n=\Delta X_n\ind{\{n>N, X_n\in\mathscr N_p\}}+\Delta Y_n'\ind{\{n>N, X_n\notin\mathscr N_p\}}. \]
If $\{Y_n',n>N\}$ is constructed so as to satisfy
 the global assumptions of Theorem \ref{Pem}, then so
does $Y_n$. Now if $\pr(X_n\to p)>0$, then 
 the same would be true of $\{Y_n\}$, contradicting the theorem.
\end{rem}

\begin{proof}[Proof of Theorem \ref{Pem}]
 Following Pemantle's proof there are two steps that need verification:\\
\noindent\textbf{Step 1:} Show that there is a $\beta>0$ such that for all $N$ large enough 
\begin{equation}\label{step1}
 \pr_N\bigg[\sup_{k\geq N}|X_k-p|>\beta/\sqrt N\bigg]\geq 1/2.
\end{equation} 
\noindent\textbf{Step 2:} 
Let
\begin{equation}\label{stopp} \tau=\inf\{k\geq N: |X_k-p|>\beta/\sqrt N\}. \end{equation}
Conditional on $\{\tau<\infty\}$ show that
\begin{equation}\label{step2}
 \pr_\tau\bigg[\inf_{k\geq \tau}|X_k-p|\geq \beta/2\sqrt N\bigg]\geq a, 
\end{equation}
for some $a>0$ not depending on $N$.

If  (\ref{step1}) and (\ref{step2}) are true then
\[ \pr_N(p\notin\acc)\geq\pr_N(\tau<\infty)
\pr_{\tau}\bigg(\sup_{k\geq\tau}|X_k-p|>\beta/2\sqrt N\;\big|\{\tau<\infty\}\bigg)\geq \frac a2>0, \]
and the result follows from Lemma \ref{con}.

\textbf{Notation:} Throughout the proof we will justify inequalities
(as they appear in calculations)
by stating that they hold if a parameter is sufficiently large. We
will denotes this by $\leqla n, \geqla n$ or $\leqla{}, \geqla{}$ 
if the inequality holds
if $n$ is sufficiently large, or if it 
is clear from the context which parameter is referred to, respectively. 
E.g.\ $\frac{10}n+\frac1{\sqrt n} \leqla{}  \frac2{\sqrt n}$, 
since this is true if $n\geq 100$.  

\vspace{0.3cm}
\noindent\textbf{Verification of Step 1:} \\
First, in view of Remark \ref{local}, we assume that $f(x)(x-p)\geq 0$ and
$\ex_n U_{n+1}^2\geq K_L$ holds globally.

We aim to show that  $\pr_N\{\tau=\infty\}\leq 1/2$ where $\tau$ is defined in (\ref{stopp}).

Recall that $K_\Delta=c_u(K_f+K_u)$, so that we have the bounds
\[ |\Delta X_n|\leq \frac{K_\Delta}{n+1}\quad\mbox{and}\quad(\Delta X_n)^2\leq \frac{K_\Delta^2}{(n+1)^2}. \]

We may assume that $\tau>N$, otherwise there is nothing to prove.
Examine the process $|X_{\tau\wedge m}-p|^2$ for $m>N$. An upper bound on
this quantity is given by
\begin{align*} 
 |X_{\tau\wedge m}-p| &= |X_{\tau\wedge m-1}-p+\Delta X_{\tau\wedge m-1}|\\
&\leq \frac{\beta}{\sqrt N}+\frac{K_\Delta}{\tau\wedge m}\leq \frac{\beta}{\sqrt N}+\frac{K_\Delta}{N} 
\leqla{ } 2\frac{\beta}{\sqrt N},
\end{align*}
and so  
\begin{align}\label{upper}
 G_n(m)=\ex_N[(X_{\tau\wedge m}-p)^2]
\leq \frac{4\beta^2}{N}.
\end{align}
Next, we make use of the relation 
\begin{align*}
(X_{\tau\wedge m}&-p)^2 = [X_{\tau\wedge (m-1)}-p+\Delta X_{m-1}\ind{\tau\geq m}]^2 \\
&= (X_{\tau\wedge (m-1)}-p)^2+2(X_{\tau\wedge (m-1)}-p)\Delta X_{m-1}\ind{\tau\geq m} \\
&\quad\quad\quad\quad\quad\quad\quad\quad\;\,  +(\Delta X_{m-1})^2\ind{\tau\geq m}.
\end{align*}
Since $m>N$ we have $\fa_N\subset \fa_{m-1}$ so any conditional 
expectation $\ex_N(\cdot)$ can be calculated
as $\ex_N\ex_{m-1}(\cdot)$. Hence, 
\begin{align}\label{gn}
G_N(m) =G_N(m-1) &+2\ex_N\big\{\ind{\tau\geq m}(X_{m-1}-p)\ex_{m-1}\big[\Delta X_{m-1}\big]\big\} \nonumber \\
&+ \phantom 2\ex_N\big\{\ind{\tau\geq m}\ex_{m-1}\big[(\Delta X_{m-1})^2\big]\big\}. 
\end{align}
Now, by the assumption $\ex_nU_{n+1}^2\geq K_L$ we get
\begin{align}\label{low}
 \ex_{m-1}[(\Delta  & X_{m-1})^2] =\ex_{m-1}[\gamma_{m}^2(f(X_{m-1})+U_m)^2] \nonumber \\
&\geq \frac{c_l}{m}\ex_{m-1}[\gamma_{m}f^2(X_{m-1})+\gamma_{m}U_{m}^2+2f(X_{m-1})\gamma_{m}U_{m}] \nonumber \\
&\geq \frac{c_l}{m}\bigg[\frac{c_l}{m}f^2(X_{m-1})+\frac{c_l}{m}\ex_{m-1}U_{m}^2
-2|f(X_{m-1})|\cdot|\ex_{m-1}\gamma_{m}U_{m}|\bigg] \nonumber \\
&\geq \frac{c_l^2K_L}{m^2}-\frac{c_lc_u^22K_fK_e}{m(m-1)^2} \nonumber \\
&\geqla{m} \frac{c_l^2K_L}{2m^2}. 
\end{align}
Also, by the assumption $f(x)(x-p)\geq 0$ we have that
\begin{align}\label{lower}
(X_{m-1}&-p)\ex_{{m-1}}[\Delta X_{m-1}] \nonumber\\
&	=(X_{m-1}-p)f(X_{m-1})\ex_{m-1}\gamma_m+(X_{m-1}-p)\ex_{m-1}\gamma_mU_m \nonumber \\
 &\geq 0-\frac{|X_{m-1}-p|K_ec_u^2}{(m-1)^2}.
\end{align}
We can now get a lower bound on $G_N(m)$. Continuing (\ref{gn}), using (\ref{low}) and 
combining (\ref{lower}) with the fact that $|X_{m-1}-p|<\beta/\sqrt N$ when $N<m\leq\tau$,
we see that
\begin{align}\label{grec}
 G_N(m)&\geqla{N} G_N(m-1)+\frac{c_l^2K_L}{2m^2}\ex_N\{\ind{\tau\geq m}\}
      -2\frac{c_u^2K_e\beta}{\sqrt N(m-1)^2}\ex_N\big\{\ind{\tau\geq m}\big\} \nonumber \\
&\geqla{N} G_N(m-1)+\frac{c_l^2K_L}{4m^2}\pr_N\{\tau\geq m\} \nonumber \\
&\geq G_N(m-1)+\frac{c_l^2K_L}{4m^2}\pr_N\{\tau= \infty\},
\end{align}
where the last inequality is true for any $m$ since $\{\tau\geq m\}\supset \{\tau=\infty\}$.
Expanding this recursion gives us 
\begin{align*}
G_N(m) &\geq G_N(N)+\frac14c_l^2K_L\pr_N\{\tau=\infty\}\sum_{k=N+1}^m\frac{1}{k^2} \\
&\geq G_N(N)+\frac14c_l^2K_L\pr_N\{\tau=\infty\}\left(\frac1{N+1}-\frac{1}{m+1}\right).
\end{align*}
Letting $m\to\infty$ and combining this with (\ref{upper}) we have
\begin{align*}
\pr_N(\tau=\infty)&\leq \frac{16\beta^2}{c_l^2K_L}\frac{N+1}{N}
\leq\frac{32\beta^2}{c_l^2K_L}.
\end{align*}
Choosing $\beta\leq\sqrt{c_l^2K_l/64}$ makes $\pr_N(\tau=\infty)\leq 1/2$.

\vspace{0.3cm}
\noindent\textbf{Verification of Step 2:}\\
Assume throughout that  
$\{\tau<\infty\}$, $\tau$ defined by (\ref{stopp}), is
realized through the event $\{X_\tau>p+\beta/\sqrt N\}$. The case
when $\{X_\tau<p-\beta/\sqrt N\}$ is similar. Set
\[ \hat\tau=\inf\{k\geq \tau: X_k<p+\beta/2\sqrt N\}. \]
We aim to show that $\pr_\tau\{\hat\tau=\infty\}\geq a$, with $a>0$.

With notation as in Lemma \ref{zwetc} let
$A=\{\hat\tau<\infty\}$ and set
$Z_k=Z_k(\tau,\hat\tau)$. Notice that by conditioning on $\tau$
we may consider it fixed (so that Lemma 5 is indeed applicable). 

Observe that by the assumption $f(x)(x-p)\geq 0$ we must have
$f(X_{k-1})\geq 0$ when $\tau<k\leq\hat\tau$, since $X_{k-1}-p>0$ in this
case. This gives us 
\begin{equation*}
 \ex_{k-1}\Delta X_{k-1}=f(X_{k-1})\ex_{k-1}\gamma_{k}+\ex_{k-1}\gamma_{k}U_{k}\geq -\frac{c_u^2K_e}{(k-1)^2}
\end{equation*}
and hence on the event $A=\{\hat\tau<\infty\}$, 
\begin{align*}
 W_{\hat\tau}=\sum_{\tau+1}^{\hat\tau} Z_k&=\sum_{\tau+1}^{\hat\tau}\ex_{k-1}\Delta X_{k-1}
    -\sum_{\tau+1}^{\hat\tau}\Delta X_{k-1} \\
&\geq  -\sum_{\tau+1}^{\hat\tau}\frac{c_u^2K_e}{(k-1)^2}-(X_{\hat\tau}-X_\tau) \\
&\geq -\frac{c_u^2K_e}{\tau-1}-p-\frac{\beta}{2\sqrt N}+p+\frac{\beta}{\sqrt N}\\
&\geq \frac{\beta}{2\sqrt N}-\frac{c_u^2K_e}{N-1}\geqla{N} \frac{\beta}{4\sqrt N}.
\end{align*}
Lemma \ref{zwetc} now gives us
\[\frac{\pr_N(\hat\tau=\infty)}{\pr_N(\hat\tau<\infty)}\geq \frac{\ex_N^2 [W_{\hat\tau}|\hat\tau<\infty]}{K_\Delta^2/\tau}\geq
 \frac{\tau\beta^2}{N16K_\Delta^2}\geq\frac{\beta^2}{16K_\Delta^2} =a'>0,\]
which implies $\pr_\tau(\hat\tau=\infty)\geq a'/(1+a')=a>0$. 
\end{proof}

\subsubsection{Strictly unstable boundary points}
In this section we deal with strictly unstable zeros on the boundary. These
present a new problem as the error
terms tend to vanish, making Theorem \ref{Pem} inapplicable. 
This new result motivated a separate paper \cite{Ren09}. 

Interestingly, the key ingredient here is an upper bound on how
fast the error terms are vanishing when the process gets near
the unstable point on the boundary. This is quite the opposite 
to the situation in Theorem \ref{Pem}, which required a lower bound on the
error terms. This may at first  seem odd. However, the heuristics
is that if the process cannot arrive at the boundary in a finite
number of steps, knowing that the error terms get small enough means
an increasing tendency for the process to follow the drift. 

\begin{theorem}\label{renlund}
Suppose of the process $\{X_n\}$ from Definition \ref{def} 
that $X_n\in(0,1)$ for all $n$.
Assume that $p\in\{0,1\}\cap Q_f$ is such that $f(x)(x-p)> 0$
whenever $x\neq p$ is close to $p$
and that there are positive constants $K_f', K_u'$
such that a.s.
\begin{align}
 \ex_n U_{n+1}^2 &\leq K_u'|X_n-p|, \label{ukrav} \\
 [f(x)]^2 &\leq K_f'|x-p|, \quad\mbox{and}\label{fkrav}\\
 k\cdot |X_k-p| &\to\infty,\quad\mbox{as}\;\;k\to\infty. \label{xkrav}
\end{align}
Then $\pr\{X_n\to p\}=0$. 
\end{theorem}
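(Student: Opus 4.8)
The plan is to treat the case $p=0$ (the case $p=1$ being symmetric, after replacing $X_n$ by $1-X_n$), so that $f(x)>0$ for $x$ near $0$ and the drift points into the interior. As in Remark \ref{local} I would first couple $\{X_n\}$ to a process for which (\ref{ukrav}), (\ref{fkrav}) and $f(x)>0$ hold globally, so that these local hypotheses may be invoked at every step. The decisive preliminary observation is that (\ref{xkrav}) forces $X_n\gg 1/n$, while $|\Delta X_n|\le K_\Delta/(n+1)$; hence the relative increments $\rho_n:=\Delta X_n/X_n$ satisfy $|\rho_n|\le K_\Delta/((n+1)X_n)\to 0$ a.s. This is what makes a logarithmic change of variable legitimate, and is the first essential use of condition (\ref{xkrav}).

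The transform I would use is $g(X_n)=-\log X_n>0$, for which $X_n\to 0$ is equivalent to $g(X_n)\to\infty$. Since $\rho_n\to 0$, eventually $\log(1+\rho_n)\ge\rho_n-\rho_n^2$, so $\ex_n[g(X_{n+1})-g(X_n)]\le-\ex_n\rho_n+\ex_n\rho_n^2$. Using $f\ge 0$ together with condition (iv) gives $-\ex_n\rho_n\le-\tfrac{c_l f(X_n)}{(n+1)X_n}+\tfrac{K_e c_u^2}{n^2X_n}$, while (\ref{ukrav}) and (\ref{fkrav}) yield $\ex_n(\Delta X_n)^2\le c_u^2 C X_n/(n+1)^2$, hence $\ex_n\rho_n^2\le\tfrac{c_u^2 C}{(n+1)^2X_n}$, with $C=(\sqrt{K_f'}+\sqrt{K_u'})^2$. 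Writing $b_n:=\tfrac{c_l f(X_n)}{(n+1)X_n}\ge 0$ for the outward drift and $a_n:=\tfrac{K_e c_u^2}{n^2X_n}+\tfrac{c_u^2 C}{(n+1)^2X_n}\ge 0$ for the error correction (so $a_n\le C''/(n^2X_n)$), this reads $\ex_n[g(X_{n+1})]\le g(X_n)-b_n+a_n$, so that $M_n:=g(X_n)+\sum_{k<n}b_k-\sum_{k<n}a_k$ is a supermartingale.

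On the event $\{\sum_k a_k<\infty\}$ the argument closes cleanly: $M_n$ is then bounded below, hence converges a.s.; since $g>0$ cannot tend to $-\infty$, the increasing sum $\sum_k b_k$ must be finite and $g(X_n)$ converges to a finite limit, so $X_n\not\to 0$. The main obstacle is the opposite regime $\sum_k a_k=\infty$, where both the outward drift $b_n$ and the volatility correction $a_n$ vanish as $X_n\to 0$ and one must show the drift is never overwhelmed. Here the point of (\ref{xkrav}) is precisely that $a_n=\tfrac1n\cdot\tfrac{C''}{nX_n}$ with $nX_n\to\infty$, so $a_n=o(1/n)$; nonetheless $\sum_k a_k$ need not converge, and ruling out $\{\sum_k a_k=\infty\}\cap\{X_n\to 0\}$ is the delicate step. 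I would attack it by combining the independent fact that on $\{X_n\to 0\}$ the drift series $\sum_n\gamma_{n+1}f(X_n)$ converges (apply Lemma \ref{wn} to the noise in the untransformed recursion (\ref{defeq}), since then both $X_n$ and $\sum\gamma_kU_k$ converge) with the separation (\ref{xkrav}); controlling this divergent-compensator case is exactly the difficulty that warranted the separate treatment in \cite{Ren09}.
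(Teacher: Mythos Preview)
Your logarithmic transform $g(X_n)=-\log X_n$ and the resulting supermartingale are set up correctly, but the proof is incomplete precisely where you concede it is: the event $\{\sum_k a_k=\infty\}\cap\{X_n\to 0\}$ is not excluded, and the tool you propose does not close the gap. Knowing that $\sum_n\gamma_{n+1}f(X_n)<\infty$ on $\{X_n\to 0\}$ gives no control over $\sum a_n$ or over $\sum(b_n-a_n)$, because the hypotheses impose no \emph{lower} bound on $f$; the drift term $b_n=c_lf(X_n)/((n{+}1)X_n)$ may be arbitrarily small while $a_n\asymp 1/(n\cdot nX_n)$ still diverges (think of $nX_n\asymp\log n$). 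Note also that your supermartingale inequality uses only $b_n\ge 0$, i.e.\ $f\ge 0$, which cannot by itself distinguish a strictly unstable boundary point from a touchpoint---and at touchpoints convergence \emph{is} possible. Finally, appealing to \cite{Ren09} is circular: that reference is exactly the present theorem.

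The paper bypasses this compensator bookkeeping entirely. Instead of a global Lyapunov function it runs a multiplicative gambler's-ruin estimate: from level $X_N$, Lemma~\ref{zwetc} together with the second-moment bound $\ex_{k-1}(\Delta X_{k-1})^2\le C_1X_{k-1}/k^2$ (which is where (\ref{ukrav}) and (\ref{fkrav}) enter) yields
\[
\pr_N\bigl(X\text{ reaches }2X_N\text{ before }aX_N\bigr)\ \ge\ 1-\frac{1}{c_a\,NX_N},
\]
needing only (\ref{xkrav}) and never a lower bound on $f$. Iterating over successive doublings gives $\prod_{k\ge 0}\bigl(1-1/(c_a\,2^{k}NX_N)\bigr)>0$, so with probability tending to $1$ the process escapes above a fixed $\epsilon$, contradicting $X_n\to 0$. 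The reason this works while your approach stalls is scaling: the variance bound is \emph{linear} in $X$, so in Lemma~\ref{zwetc} the ratio (overshoot)$^2/$(variance) is of order $NX_N$; your $a_n$ is that same variance divided by $X_n^2$, and the extra factor $1/X_n$ is exactly what destroys summability.
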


\begin{rem}\label{notstop}
Consider the case $p=0$ in Theorem \ref{renlund}.
In our applications, $X_n$ is the fraction of white balls in an urn. If
$W_n$ and $T_n$ denote the number of white balls and the total number
of balls in the urn at time $n$ respectively, then $X_n=W_n/T_n$. 
What is usually  easy to verify is that $T_n=\bigo(n)$, say $T_n\leq Cn$, 
which implies $nX_n\geq \frac 1C W_n$ so that  assumption (\ref{xkrav}) just means
that we need that $W_n\to \infty$. 
\end{rem}

\begin{proof}[Proof of Theorem \ref{renlund}]
We will, for ease of notation, assume in the proof that $p=0$. 
Let $\epsilon>0$ be a number such that $f(x)>0$ if $0<x\leq\epsilon$.

The idea of the proof is to show that should the process 
ever be close to the origin it is very likely that it
doubles its  value before it decreases to a fraction of its
value. So likely in fact, that it will do this time and time
again until it reaches above $\epsilon$.   

Consider the process $\{X_n\}$ after time $N$.
Let $\lambda>0$ be a small constant and let $a\in(0,1-\lambda)$. Define 
\begin{align}\label{stoppisar}  \tau_1 &=\inf\{k\geq N: X_k\geq (2X_N)\wedge \epsilon\} \quad\mbox{and} \nonumber\\
 \hat\tau_1&=\inf\{k\geq N: X_k\leq aX_N\} . \end{align}
Since we assume that $X_n\in(0,1)$ for all $n$, we know
that $X_N>0$ and thus $\hat\tau_1>N$. 
Let $\tau=\tau_1\wedge\hat\tau_1$ and define the two events 
$A=\{\hat\tau_1<\tau_1\}$ and $B=\{\tau_1<\hat\tau_1\}$. 
Anticipating an application of Lemma \ref{zwetc}, we let 
$Z_k=Z_k(N,\tau)$ and $W_m$ as in that lemma.

On the event $A$ we have for any $N<k\leq \hat\tau_1$ that
$X_{k-1}<\epsilon$ and hence
\begin{align*}
  \ex_{k-1}\Delta X_{k-1} &=f(X_{k-1})\ex_{k-1}\gamma_k+\ex_{k-1}\gamma_k U_k >	-\frac{c_u^2K_e}{(k-1)^2}.
\end{align*}
Using this estimate  gives us, on the event $A$,
\begin{align*} 
W_{\tau} &=\sum_{N+1}^{\hat\tau_1}\ex_{k-1}\Delta X_{k-1}-\sum_{N+1}^{\hat\tau_1}\Delta X_{k-1} \\
&\geq -\sum_{k=N+1}^\infty\frac{c_u^2K_e}{(k-1)^2}-(X_{\hat\tau_1}-X_N)\geq X_N(1-a)-\frac{c_u^2K_e}{N-1} \\
&\geq X_N\left(1-a-\frac{c_u^2K_e}{X_N(N-1)}\right)\geq X_N(1-\lambda-a) ,
\end{align*}
where the last step is justified by assumption (\ref{xkrav})
if $X_NN\geq \frac{c_u 2K_e}{\lambda}+1$.

Next, we use assumptions (\ref{ukrav}) and (\ref{fkrav}) to get 
\begin{equation}\label{ykrav}
 \ex_{k-1}(\Delta X_k)^2\leq 2\ex_{k-1}\gamma_k^2[f^2(X_{k-1})+U_k^2]\leq \frac{C_1X_{k-1}}{k^2},
\end{equation}
where $C_1=2c_u^2(K_f'+K_u')$.
This in turn gives, since $X_k< (2X_N)\wedge\epsilon\leq 2X_N$ whenever $k<\tau$,
\begin{align*}
 \ex_N[W_{\tau}^2] 
&\leq \ex_N\left[ \sum_{N+1}^\tau (\Delta X_{k-1})^2\right]
\leq C_12X_N\sum_{N+1}^\infty \frac{1}{k^2} 
\leq C_1\frac{2X_N}{N}. 
\end{align*}

Since $f(x)>0$ on $0<x<\epsilon$ we know from Lemma \ref{convergence} that
$X_n$ eventually must leave $(\zeta,\epsilon)$, for
any $0<\zeta<\epsilon$, and hence that
$B=A^c$. So, we can apply Lemma \ref{zwetc} to get
\begin{align}\label{estimatet}
 \frac{\pr_N(B)}{\pr_N(A)} \geq \frac{\ex_N^2[W_{\tau}|A]}{\ex_N[W_{\tau}^2]}
 \geq \frac{[X_N(1-\lambda-a)]^2N}{C_12X_N}
=\frac{[1-\lambda-a]^2}{2C_1}NX_N.
\end{align}

Exploiting that $\pr(A)+\pr(B)=1$, we see that (\ref{estimatet}) is equivalent to,
with $c_a=(1-\lambda-a)^2/2C_1$,
\begin{equation}\label{sj}
 \pr_N(B)\geq \frac{c_aNX_N}{1+c_aNX_N}=1-\frac{1}{1+c_aNX_N}\geq 1-\frac1{c_aNX_N}.
\end{equation}
Notice that this estimate  
decreases if $a$ increases. 

Next, define stopping times recursively from (\ref{stoppisar})
\begin{align*}
 \tau_{n+1}&=\inf\{k\geq \tau_n: X_k\geq (2X_{\tau_n})\wedge\epsilon\}\quad\mbox{and} \\
\hat\tau_{n+1}&=\inf\{k\geq \tau_n: X_k\leq aX_N\}=\inf\{k\geq \tau_n: X_k\leq a_nX_{\tau_n}\},
\end{align*}
where $a_n$ is some (stochastic) number s.t.\ $a_nX_{\tau_n}=aX_N$ and thus
$a_n\leq a$ since either $X_N\geq \epsilon$ (in which case $a_n=a$) or
\[ X_{\tau_n}\geq (2X_{\tau_{n-1}})\wedge \epsilon \geq (2^n X_N)\wedge \epsilon >X_N \]
(in which case $\tau_n>N$ and $a_n<a$). 
Define the events $A_k=\{\tau_k<\hat\tau_k\}$ and stopping times
$T_k=\tau_k\wedge\hat\tau_k$. Then (\ref{sj}) yields, if $X_{\tau_k}\geq 2^kX_N$,
\[ 
\pr(A_{k+1}|A_k,\fa_{T_k})\geq 1-\frac1{c_{a_k}\tau_kX_{\tau_k}}
\geq 1-\frac1{c_a2^kNX_N}, \]
since $\tau_k\geq N$ and $a_{k}\leq a$.  If $X_{\tau_k}\geq \epsilon$ then
$\pr(A_{k+1}|A_k,\fa_{T_k})=1$. In either case
\[ \pr(A_{k+1}|A_k,\fa_{T_k})\geq 1-\frac1{c_a2^kNX_N}, \] holds.

Now, $\cap_k A_k$ is a subset of the event that the process after $N$ reaches
above $\epsilon$. Hence
\begin{align*}
 \pr_N\bigg(\sup_{j \geq N} X_j &\geq \epsilon\bigg) \geq 
\pr_N\left( \bigcap_{k=1}^\infty A_k\right)
\geq \prod_{k=1}^\infty\left(1-\frac1{c_a2^{k-1}NX_N}\right) \\
&\geq 1-\sum_{k=1}^\infty \frac 1{c_aNX_N2^{k-1}}=1-\frac{2}{c_aNX_N}\to 1,\quad\mbox{as $N\to\infty$}. 
\end{align*}

This contradicts the assumption that $P\{X_n\to 0\}>0$ since this requires that
there is a positive probability that 
for every prescribed $\delta>0$ there is an $N_\delta$ such that 
$n\geq N_\delta$ implies $X_n<\delta$.  
\end{proof}

\subsection{Convergence}
Now we know when we may exclude some unstable points from the
set of limit points. Next, we need to
check that stability of a point $p$ is in fact enough to ensure positive
probability of convergence to $p$. After that we also need to know what happens
at a touchpoint. A touchpoint $p'$ may be thought of as having a ``stable side''
and an ``unstable side''. Intuitively, one may think that convergence to 
$p'$ might be possible from the stable side, which is indeed the case.

For the results of the sections to follow we need the notion of 
\emph{attainability}, recall Definition \ref{attain}.
This is just to rule out trivialities, as there might exists a stable point
in a neighborhood where the process is somehow forbidden to go.
Consider e.g.\ the urn model studied in \cite{HLS80}; an urn has balls 
of two colors, white and black say, and at each timepoint $n$ there is 
a proportion $X_n$ of white balls and a ball is drawn and
replaced along with one additional ball of the same color. 
The probability of drawing a white ball is not $X_n$ but
$h(X_n)$ where $h:[0,1]\to[0,1]$.  This yields a drift function of
$f(x)=h(x)-x$. Consider e.g.\
$h(x)=0$ if $0\leq x\leq 1/2$ and define $h$ on $(1/2,1]$
in such a way that $h>0$ and a stable zero $p$ of $f$ exists there.  
Then the attainability of neighborhoods of this $p$ depends on
the initial condition $X_0$. If
$X_0\leq 1/2$ then $p$ can not be reached as $X_n$ (strictly) decreases 
to zero.

\subsubsection{Stable points}
That convergence to stable points is possible is known in related
models, e.g.\ \cite{HLS80} has a similar result as Theorem \ref{stable}
below. For related multidimensional results, see section 7.1 of
\cite{Ben99}. 

\begin{theorem}\label{stable}
 Suppose $p\in Q_f$ is stable, i.e.\ $f(x)(x-p)<0$ whenever
$x\neq p$ is close to $p$. If every
neighborhood of $p$ is attainable then $\pr(X_n\to p)>0$. 
\end{theorem}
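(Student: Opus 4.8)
The plan is to combine a Lyapunov--type confinement estimate with the attainability hypothesis, treating first the interior case $p\in(0,1)$. Choose a neighborhood $\mathscr N_p=(p-\eta,p+\eta)$ on which the stability condition $f(x)(x-p)<0$ holds for $x\neq p$, put $V(x)=(x-p)^2$, fix a large time $N$, and stop the process at $\tau=\inf\{k\geq N:X_k\notin\mathscr N_p\}$. On $\{\tau>k\}$ I would expand
\[ \ex_k[V(X_{k+1})-V(X_k)]=2(X_k-p)f(X_k)\ex_k\gamma_{k+1}+2(X_k-p)\ex_k(\gamma_{k+1}U_{k+1})+\ex_k(\Delta X_k)^2. \]
The first term is $\leq 0$ by stability; the remaining two are $\bigo(k^{-2})$ by conditions (i)--(iv), using $|X_k-p|\leq 1$, the bound $|\ex_k(\gamma_{k+1}U_{k+1})|\leq K_e\gamma_k^2$, and $(\Delta X_k)^2\leq K_\Delta^2/(k+1)^2$. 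Consequently $S_n:=V(X_{n\wedge\tau})+C\sum_{k\geq n}k^{-2}$ is a nonnegative supermartingale for a suitable constant $C$.

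From the supermartingale property I obtain the confinement estimate $\pr_N(\tau<\infty)\leq S_N/\eta^2$, since on $\{\tau<\infty\}$ the stopped limit satisfies $S_\infty=V(X_\tau)\geq\eta^2$. As $S_N\leq(X_N-p)^2+\bigo(1/N)$, this probability is bounded away from $1$ --- say $\leq\tfrac12$ --- once $X_N$ lies within $\eta/2$ of $p$ and $N$ is large. This is exactly where attainability enters: fixing $N_0$ large and invoking attainability of $(p-\eta/2,p+\eta/2)$ produces some $N\geq N_0$ with $\pr(X_N\in(p-\eta/2,p+\eta/2))>0$, on which $\pr_N(\tau=\infty)\geq\tfrac12$.

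It remains to upgrade confinement to convergence. On $\{\tau=\infty\}$ the process stays in $\mathscr N_p$, so $\acc\subseteq\overline{\mathscr N_p}$; applying Lemma \ref{convergence} to closed subintervals of $(p-\eta,p)$ and $(p,p+\eta)$, on which continuity of $f$ yields a uniform bound $|f|>\delta$ of fixed sign, shows that $\acc$ avoids the punctured neighborhood. Hence $X_n\to p$ on $\{\tau=\infty\}$, and
\[ \pr(X_n\to p)\geq\ex\big[\ind{\{|X_N-p|<\eta/2\}}\,\pr_N(\tau=\infty)\big]\geq\tfrac12\,\pr(|X_N-p|<\eta/2)>0. \]

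The boundary case $p\in\{0,1\}$ runs through the same stopped--Lyapunov computation with $\mathscr N_p$ one-sided, and it is here that I would fold in the proof of Lemma \ref{fbra}. For $p=0$, were $f(0)<0$, continuity would give $f\leq-\delta$ on $[0,\eta)$, and then attainability together with Lemma \ref{convergence} would force $X_n$ down to and past $0$ with positive probability --- the summable $\bigo(k^{-2})$ errors cannot offset a downward drift of order $k^{-1}$ --- contradicting $X_n\in[0,1]$; thus $f(0)\geq0$, and symmetrically $f(1)\leq0$. The hard part is precisely this boundary analysis: outside the clean interior case one must rule out that the vanishing noise combined with a possibly weak inward drift lets the process either leak out of $[0,1]$ or fail to settle, and it is the interplay of attainability, the sign of $f$, and the summability of the error bounds that closes the gap.
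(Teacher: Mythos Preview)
Your proof is correct and follows essentially the same line as the paper's: both use the Lyapunov function $V(x)=(x-p)^2$, expand $\ex_k[V(X_{k+1})-V(X_k)]$ to isolate the nonpositive drift term $2(X_k-p)f(X_k)\ex_k\gamma_{k+1}$ plus summable $\bigo(k^{-2})$ corrections, derive a confinement bound of the form $\pr_N(\tau<\infty)\leq V(X_N)/\eta^2+\bigo(1/N)$, invoke attainability to place $X_N$ near $p$ at a late time, and then upgrade confinement to convergence via Lemma~\ref{convergence}. Your packaging of the recursion as a nonnegative supermartingale $S_n=V(X_{n\wedge\tau})+C\sum_{k\geq n}k^{-2}$ is a clean equivalent of the paper's direct recursive bound $\ex_n Y_k\leq \ex_n Y_{k-1}+C(k-1)^{-2}$, and your treatment of the boundary case and of Lemma~\ref{fbra} matches the paper's in substance.
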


\begin{proof}
\noindent\textbf{Case 1: $p\in(0,1)$.} \\ We can find $a$ and $b$ such that
$a<p<b$ and  $f>0$ on $(a,p)$ and 
$f<0$ on $(p,b)$. Let 
\[ \delta=\min\{b-p,p-a\}\quad \mbox{and}\quad \epsilon=\delta/2.\] 
Define $A_j=\{p-\epsilon\leq X_j<p+\epsilon\}$.
and let $N$ be large
enough so that $\frac{C}{\delta^2(N-1)}\leq \frac 1{12}$,
where $C=K_\Delta^2+2K_ec_u^2$. 

For $k\geq n$, define $Y_k=(X_k-p)^2$.
By attainability there exists an $n\geq N$ such that
$\pr\{A_n\}>0$. 
Define \[ \tau=\inf\{k>n:X_k\leq a\;\mbox{or}\;X_k\geq b\}. \]
We want to show that $\pr(\tau=\infty|A_n)$ is non-zero.

Notice that on $A_n$ we have $Y_n\leq \epsilon^2$ and if $\tau<\infty$ then $Y_\tau\geq \delta^2$.

On $A_n$, for any $n<k\leq\tau$, we have $f(X_{k-1})(X_{k-1}-p)<0$, so that
\begin{align*}
 \ex_nY_k&=\ex_n(X_{k-1}-p+\Delta X_{k-1})^2 \\
&=\ex_n(X_{k-1}-p)^2+\ex_n(\Delta X_{k-1})^2 \\
&\quad\quad +\ex_n[2\gamma_k(X_{k-1}-p)f(X_{k-1})]+2\ex_n(X_{k-1}-p)\ex_{k-1}\gamma_kU_k \\
&\leq \ex_n Y_{k-1}+\frac{K_\Delta^2}{k^2}+\frac{2K_ec_u^2}{(k-1)^2}\\
&\leq \ex_n Y_{k-1}+C(k-1)^{-2}.
\end{align*}
Expanding the above recursion gives a bound on the conditional expectation
\begin{align}
\ex(Y_\tau|A_n)&\leq \ex(Y_n|A_n)+\sum_{k=n+1}^\tau \frac{C}{(k-1)^2} \nonumber \\
&\leq \ex(Y_n|A_n)+\frac{C}{n-1}\leq \epsilon^2+\frac{C}{n-1}. \label{con1} 
\end{align}
Now,
\begin{align*}
 \ex(Y_\tau|A_n)\geq \ex_n(Y_\tau\ind{\tau<\infty}|A_n)\geq \delta^2\pr(\tau<\infty|A_n),
\end{align*}
and this fact in combination with (\ref{con1}) yields
\[ \pr(\tau<\infty|A_n)\leq \left(\frac{\epsilon}{\delta}\right)^2+\frac{C}{\delta^2(n-1)}\leq 
\frac14+\frac1{12}=\frac13. \]
Hence, $\pr(\tau=\infty|A_n)\geq 2/3$ so there is a positive probability
that  $\{X_{n+k}\}$ never leaves $(a,b)$. On the event $\{\tau=\infty\}$
Lemma \ref{convergence} implies that $\lim X_n\in \{a,p,b\}$.
Since we can repeat our argument
with any $a'\in(a,p)$ instead of $a$, and any $b'\in(p,b)$ instead of $b$, this
implies that $\lim X_n=p$ (on the event $\{\tau=\infty\}$).

\textbf{Case 2: $p\in\{0,1\}$} \\
We will prove the statement for $p=0$, with $p=1$ being analogous.
Assume  that $f<0$ on $(0,\delta_*]$, for some $\delta_*>0$. 
Set $N$ so large that $4C/\delta_*^2(N-1)\leq 1/12$, 
$\epsilon_*=\delta_*/4$, 
$A_j=\{X_j\leq \epsilon_*\}$ 
and 
$\tau=\inf\{k>n:X_k\geq \delta_*/2\}$, where $n\geq N$ is such that $\pr(A_n)>0$.
Analogous to Case 1 we calculate 
$\ex_n(X_\tau^2|A_n)\leq \epsilon_*^2+C/(n-1)$ and
$\ex_n(X_\tau^2|A_n)\geq \frac{\delta_*^2}{4}\pr_n(\tau<\infty|A_n)$,
so that $\pr_n(\tau=\infty|A_n)\geq 2/3$. We know that $\pr(A_n)>0$
for some $n\geq N$ by attainability.

So, there is a positive probability of the event 
$B=\{X_{n+k}\leq\delta_*/2$ for all $k\}$. 
By Lemma \ref{convergence} it follows that on this event $B$
we must have $\lim X_n\in\{0,\delta^*/2\}$. But we may repeat the argument
above, choosing any $\delta_*'\in(0,\delta_*)$ in place of $\delta_*$,
concluding that $\lim X_n\in\{0,\delta_*'/2\}$. This makes it clear that
 given the event $B$ we must have $\lim X_n=0$.

\textbf{Postponed proof of Lemma \ref{fbra}}\\ \label{case2}
We will prove Lemma \ref{fbra} in the case when the origin is
attainable, the case of the other boundary point, $x=1$, is analogous.
We assume that $f$ is continuous at $x=0$ and we need to prove that $f(0)\geq 0$. 
Assume the contrary, i.e.\ that $f(0)<0$ and hence, by continuity,
that $f<0$ on $[0,\delta_*)$, for some $\delta_*>0$.

Recall  the notation of Lemma \ref{wn} and
\ref{convergence}; $W_n=\sum_1^n \gamma_kU_k$. Lemma \ref{wn} ensures  that
$\{W_n\}$ converges. Hence, for some large (stochastic) $N_W\geq n$ we have that $i,j\geq N_W$
implies $|W_i-W_j|<\delta_*/2$. 

From identical calculations as in Case 2 above, we can conclude that there is
a positive probability of $\{X_{n+k}<\delta_*/2$
for all $k\}$.
%so that $X_{N_W+k}<\delta_*$ could be true for all $k\geq 0$. 
Then 
\[ X_{N_W+k}< X_{N_W}+\sum_{j=N_W+1}^{N_W+k}\gamma_jf(X_{j-1})+\delta_*/2\to-\infty,\;\;\mbox{as $k\to\infty$,} \]
with positive probability, which is a contradiction. Hence, $f(0)\geq 0$. An analogous
argument shows that $f(1)\leq 0$, and Lemma \ref{fbra} follows.
\end{proof}

\subsubsection{Touchpoints}
Theorem \ref{Pem2} below asserts that as long as the slope toward a touchpoint $p$
(from the stable side) is not to steep, convergence is possible. $p$ need in fact not
be a touchpoint as the proof only shows that convergence to $p$ may happen
from the stable side. In our applications, the drift function is differentiable
and thus the slope tends to zero, making the result applicable.

The method of proof is taken from a similar result of \cite{Pem91}, 
which deals with the same urn model as \cite{HLS80}. 
 The interested reader is adviced to read this
article for more, and stronger, results on touchpoints, albeit not
in this more general setting of stochastic approximation. 

\begin{theorem}\label{Pem2}
Suppose  that $p$ is such
that $K(p-x)<f(x)<0$ for some $K<\frac 1{2c_u}$ 
whenever $x>p$ is close to $p$.

Also, assume the following technical condition:
\begin{itemize}
\item[$\bigstar$] Suppose there exists some $p'>p$ such that for every $N\geq 0$ and 
every $y\in(p,p')$ there
exists an $n\geq N$ such that $\pr(X_n>y$ and $X_{n+1}<y)>0$.
\end{itemize}
[Or similarly suppose that $0<f(x)<K(p-x)$ for some $K<\frac 1{2c_u}$
whenever $x<p$ is close to $p$ and assume the existence of 
a $p'<p$ such that for every $N\geq 0$ and every $y\in(p',p)$ there exists 
some $n\geq N$ such that $\pr(X_n<y$ and $X_{n+1}>y)$.]
\end{theorem}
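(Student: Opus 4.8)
The plan is to prove $\pr(X_n\to p)>0$; I treat the first case, where $f<0$ and $K(p-x)<f(x)$ hold on an interval $(p,p+\delta_0)$, the other being symmetric. Write $d_k=X_k-p$, so that $-K d_k<f(X_k)<0$ whenever $X_k\in(p,p+\delta_0)$. The whole difficulty is to produce a positive-probability event on which the process, having once come close to $p$ from the stable side, thereafter stays strictly above $p$ and converges to it. Since $f<0$ just above $p$, keeping $X_k>p$ for all large $k$ will (via Lemma \ref{convergence}) already force $X_k\to p$, so the one thing I must control is a downward crossing of the level $p$.

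First I would use condition $\bigstar$ to manufacture a good starting configuration at an arbitrarily late time. Fixing a level $y\in(p,p')$ with $y-p$ small, $\bigstar$ yields, for every $N$, some $n\ge N$ with $\pr(X_n>y,\,X_{n+1}<y)>0$; since $|\Delta X_n|\le K_\Delta/(n+1)$, on this event $X_{n+1}\in(y-K_\Delta/(n+1),\,y)$, so for $n$ large $X_{n+1}\in(p,y)$ with $X_{n+1}-p\ge (y-p)/2$. Relabelling $n+1$ as $N$, I obtain a positive-probability event on which $X_N\in(p,y)$, with $N$ as large as I please and $d_N\ge(y-p)/2$ bounded below by a fixed constant.

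The heart of the proof is the following rescaling, which is where the hypothesis $K<\tfrac1{2c_u}$ is spent. Pick $\theta$ with $c_u K<\theta<\tfrac12$, set $m_k=k^\theta d_k$, and stop at $\tau=\inf\{k\ge N:X_k\le p\text{ or }X_k\ge p+\delta\}$ for a fixed $\delta<\delta_0$. Using $-K d_k<f(X_k)<0$ together with $|\ex_k\gamma_{k+1}U_{k+1}|\le K_e\gamma_k^2$, one gets $\ex_k d_{k+1}\ge d_k(1-c_u K/(k+1))-\bigo(k^{-2})$ on $\{\tau>k\}$, whence, because $\theta>c_u K$, the stopped process $m_{k\wedge\tau}$ is a submartingale up to predictable corrections summing to $\bigo(N^{\theta-1})$. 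Its martingale (noise) part $M_k$ satisfies $\ex_N M_\infty^2\le\sum_{j\ge N}c_u^2K_u^2\, j^{2\theta-2}=\bigo(N^{2\theta-1})$, finite precisely because $2\theta<1$. Since $m_N\ge N^\theta(y-p)/2\to\infty$ while the fluctuations of $M_k$ are only $\bigo(N^{\theta-1/2})\to0$, a downward crossing of $p$ before $\tau$ forces $\inf_k M_k\le -m_N/2$, and Doob's $L^2$ maximal inequality bounds its probability by $4\ex_N M_\infty^2/m_N^2=\bigo(1/N)$.

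It remains to forbid escape through the top. With $Y_k=(X_k-p)^2$, the relation $f(X_k)(X_k-p)<0$ gives $\ex_k Y_{(k+1)\wedge\tau}\le Y_{k\wedge\tau}+C k^{-2}$, so $\ex_N Y_\tau\le (y-p)^2+C/N$; as $Y_\tau\ge\delta^2$ on a top exit, that event has probability at most $((y-p)^2+C/N)/\delta^2$, which I make $<\tfrac12$ by taking $y-p<\delta/2$ and $N$ large. Combining, $\pr_N(\tau=\infty)\ge 1-\bigo(1/N)-((y-p)^2+C/N)/\delta^2>0$. On $\{\tau=\infty\}$ the process stays in $(p,p+\delta)$ with $X_k>p$ for all $k$, and applying Lemma \ref{convergence} repeatedly on compact subintervals of $(p,p+\delta)$, exactly as in the concluding step of Theorem \ref{stable}, forces $X_k\to p$. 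Multiplying by the $\bigstar$-probability of the starting event gives $\pr(X_n\to p)>0$. The delicate point throughout is the third paragraph: one must check that the discrete-time corrections to the submartingale property and to the noise variance are genuinely summable, and this works only in the regime $\theta<\tfrac12$, i.e.\ only because $K<\tfrac1{2c_u}$.
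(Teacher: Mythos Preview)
Your proof is correct and takes a genuinely different route from the paper. The paper, following \cite{Pem91}, constructs explicit sequences of levels $p_n=p+r^n$ and times $T_n=\exp\{n(1-r)/(\gamma K_1)\}$ with $c_uK<K_1<\tfrac12$ and $r\in(0,1)$ chosen so that $T_n r^{2n}>1$; it sets $\tau_n$ to be the first crossing below $p_n$, bounds $\pr(A_{n+1}^c\mid A_n)\le C/g(r)^{2n}$ by a martingale-deviation estimate at each stage, and concludes $\pr(\tau_n>T_n\ \text{for all}\ n)>0$ via an infinite product, with a separate family of events $B_n$ controlling upward excursions. You instead use a single Lyapunov rescaling $m_k=k^\theta(X_k-p)$ with $c_uK<\theta<\tfrac12$: the lower inequality makes $m_{k\wedge\tau}$ a submartingale up to predictable corrections of order $N^{\theta-1}$, the upper makes the martingale part have variance $O(N^{2\theta-1})$, and a single application of the $L^2$ maximal inequality replaces the paper's countable product. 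Your top-exit bound via $(X_k-p)^2$, taken from the proof of Theorem~\ref{stable}, is likewise more direct than the paper's $B_n$ machinery. Both arguments spend the hypothesis $K<1/(2c_u)$ at exactly the same place---balancing drift against noise---yours just does it in one stroke. One minor correction: the increments of your martingale $M$ are $(k+1)^\theta(d_{k+1}-\ex_k d_{k+1})$, which include a contribution from $f(X_k)(\gamma_{k+1}-\ex_k\gamma_{k+1})$ in addition to the $U_{k+1}$ term, so the variance constant should be $K_\Delta^2$ rather than $c_u^2K_u^2$; this does not affect the $O(N^{2\theta-1})$ order or the conclusion.
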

\begin{rem}
 Condition $\bigstar$  states that every point in some neighborhood to the right -- the 
\emph{stable side} -- of $p$ can potentially be down-crossed at some ``later'' time.
\end{rem}

\begin{proof}
First, without loss of generality we make the global assumption that $f(x)<0$ for 
$x\in(0,1]\backslash \{p\}$ (remember Remark \ref{local}). The reason that the origin is not 
included in the interval where  $f$ is negative is Lemma \ref{fbra}. 
These global assumptions are somewhat superfluous, as we will only be concerned
with the behavior of the process to the right of $p$. We will however 
assume that the inequality $K(p-x)<f(x)<0$ holds for all $x>p$.

The idea here is to show that 
\begin{equation}\label{need} 
\pr\{\exists N:n\geq N\;\,\mbox{implies\;\,} X_n>p\}>0, 
\end{equation}
 i.e.\
that it might happen that the process never again reaches below $p$. Given the
event that the process stays above $p$, Lemma \ref{convergence} implies that 
the process must converge to $p$ (from above).

The proof is rather technical and there are numerous constants that 
needs fine tuning in order for everything to work.  First, we will use
a sequence of times $0<T_n<T_{n+1}\nearrow\infty$ and a 
sequence of points $1>p_n>p_{n+1}\searrow p$ starting with an index $N$
large enough so that $p_N<p'$, where $p'$ is defined by condition $\bigstar$. 

We define 
\begin{align*}
\tau_N  &=\inf\{j>T_N: X_{j}<p_N<X_{j-1}\}\quad\mbox{and for $n\geq N$} \\
\tau_{n+1} &=\inf\{j\geq \tau_n:X_j<p_{n+1}\}.
\end{align*}
Notice that 
by $\bigstar$ we have $\pr(\tau_N<\infty)>0$, and 
if 
$X_l\leq p$ for some $l>\tau_N$ then all stopping times are
bounded, namely $\tau_n\leq l$, for all $n\geq N$. 

If we can show that $\pr(\tau_n>T_n,$ for all $n\geq N)>0$,
this will imply (\ref{need}). 

For reasons that only become apparent later
we set 
\begin{align}
 T_n=\exp\left\{\frac{n(1-r)}{\gamma K_1}\right\}\quad\mbox{and}\quad
p_n=p+r^n
\end{align}
where $c_uK<K_1<1/2$ and $\gamma>1$ such that $\gamma K_1<1/2$
and $r\in(0,1)$ is to be specified by the demand that
\begin{equation}\label{defrel} T_n\cdot r^{2n}>1,\quad\mbox{i.e.}\quad 
 \left(r\exp\left\{\frac{1-r}{2\gamma K_1}\right\}\right)^{2n}>1. \end{equation}
If we let $g(r)=re^{(1-r)/2\gamma K_1}$, then
$g(1)=1$ and $g'(1)=1-1/2\gamma K_1<0$, so that we know
that there exists an $r\in(0,1)$ such that $g(r)>1$.
From now on we fix\footnote{We may assume that $r\notin\{1-\gamma K_1\ln m/n: m,n\in\mathbb N\}$
so that $T_n\notin\mathbb N$, as it is easier to consistently think of $T_n$ as a non-integer.}
 such an $r$.
Let 
\begin{equation*}
 A_n=\{\tau_n>T_n\}\quad\mbox{and}\quad B_n=\left\{\sup_{j>\tau_n}X_j\leq p_n+q_n\right\},
\end{equation*}
where $q_n=r^n(\gamma-1)>0$.

Set
\[ Z_k=\ex_{k-1}\Delta X_{k-1}-\Delta X_{k-1}\quad\mbox{and for $m>n$}\quad
 W_{n,m}=\sum_{k=n+1}^m Z_k. \]
We always have the estimate, due to assuming $f\leq 0$,
\[ \ex_{k-1}\Delta X_{k-1}\leq f(X_{k-1})\frac{c_l}{k}+\frac{c_u^2K_e}{(k-1)^2}\leq \frac{c_1}{(k-1)^2}, \]
where $c_1=c_u^2K_e$, and  hence on $A_n$, for $j>\tau_n$,
\[ W_{\tau_n,j}=\sum_{k=\tau_n+1}^j\ex_{k-1}\Delta X_{k-1}-(X_j-X_{\tau_n})
\leq p_n-X_j+\frac{c_1}{\tau_n-1}\leq p_n-X_j+\frac{c_1}{\floor{T_n}}. \]
We will begin by bounding
\begin{align}\label{label72}
 \pr(B_n^c|A_n) &=\pr\left\{\sup_{j>\tau_n}X_j>p_n+q_n\,\big|A_n\right\} \nonumber\\
&\leq \pr\left\{\sup_{j>\tau_n}\left(p_n+\frac{c_1}{\floor{T_n}}-W_{\tau_n,j}\right)>p_n+q_n\,\big|A_n\right\} \nonumber\\
&= \pr\left\{\inf_{j>\tau_n}W_{\tau_n,j}<-q_n+\frac{c_1}{\floor{T_n}}\,\big|A_n\right\}. 
\end{align}
Since $1/\floor{T_n}\leq 2/T_{n}<2r^{2n}$ and 
$q_n=r^n(\gamma-1)$ means that we can make $n$ large enough to ensure that 
\[h_1(n)=-q_n+c_1/\floor{T_n}<0.\] 
Set $\hat\tau=\inf\{k>\tau_n:W_{\tau_n,k} < h_1(n)\}$ and combine the facts
that 
\begin{align*} 
\ex [W_{\tau_n,\infty}^2|A_n] &\leq K_\Delta^2/T_n\quad\mbox{and}\\
\ex [W_{\tau_n,\infty}^2|A_n] &\geq \ex [W_{\tau_n,\hat\tau}^2\ind{\{\hat\tau<\infty\}}|A_n] 
\geq h_1^2(n)\pr(\hat\tau<\infty|A_n)
\end{align*}
so that we can continue the estimates of (\ref{label72})
\begin{equation} \label{howto}
\pr(B_n^c|A_n)\leq \pr(\hat\tau<\infty|A_n)\leq \frac{K_\Delta^2}{T_nh_1^2(n)}. 
\end{equation}
Notice that on the event $B_n$, meaning that $j\geq \tau_n$
implies $X_j\leq p_n+q_n$, we have
\begin{align*}
 \sum_{j=\tau_{n}+1}^{\tau_{n+1}}\ex_{j-1}\Delta X_{j-1}&=
\sum_{j=\tau_{n}+1}^{\tau_{n+1}} \ex_{j-1}\gamma_{j}(f(X_{j-1})+U_{j}) \\
&\geq -\sum_{j=\tau_{n}+1}^{\tau_{n+1}} \frac{c_u}{j}K(X_{j-1}-p)
 -\sum_{j=\tau_{n}+1}^{\tau_{n+1}} |\ex_{j-1}\gamma_{j}U_{j}| \\
&\geq -c_uK(p_n+q_n-p)\sum_{j=\tau_{n}+1}^{\tau_{n+1}} \frac1{j}
 -K_ec_u^2\sum_{j>\tau_n}\frac1{(j-1)^2}. 
\end{align*}
Introduce $\kappa_n=c_uK(p_n+q_n-p)=c_uK\gamma r^n$. 
By the previous bound, given $A_n=\{\tau_n>T_n\}$, the events $A_{n+1}^c=\{\tau_{n+1}\leq T_{n+1}\}$
and $B_n$ together imply, first
\begin{align*}
\sum_{j=\tau_{n}+1}^{\tau_{n+1}}\ex_{j-1}\Delta X_{j-1}
&\geq -c_uK\gamma r^n\sum_{T_n+1<j<T_{n+1}+1}\frac{1}{j}-\frac{K_ec_u^2}{\floor{T_n}} \\
&\stackrel{[1]}{\geq} -c_uK\gamma r^n(\ln T_{n+1}-\ln T_n)-\frac{\kappa_n}{T_n}-\frac{K_ec_u^2}{\floor{T_n}}\\
&\stackrel{[2]}{\geq} -c_ur^n(1-r)K/K_1-\frac{2K_ec_u^2}{\floor{T_n}},
\end{align*}
where $[1]$ is motivated by the fact that if 
 $a,b\in\real\backslash\mathbb N$ are such that $0<a$ and $b>a+1$, then
$\displaystyle \sum_{a<j<b}\frac{1}{j+1}\leq \ln b-\ln a+1/a$.
$[2]$ is motivated by having $n$ large enough since
$\kappa_n$ tends to 0 as $n$ grows.
Secondly, 
by setting $\delta_n=p_n-X_{\tau_n}\leq X_{\tau_n-1}-X_{\tau_n}\leq K_\Delta/\tau_n$,
\begin{align*}
 W_{\tau_n,\tau_{n+1}} &=\sum_{j=\tau_n+1}^{\tau_{n+1}}\ex_{j-1}\Delta X_{j-1} -( X_{\tau_{n+1}}-X_{\tau_n}) \\
&\geq -c_ur^n(1-r)K/K_1-\frac{2K_ec_u^2}{\floor{T_n}}-p_{n+1} + p_{n}-\delta_n \\
&\geq r^n(1-r)[1-c_u^2K/K_1]-c_2/\floor{T_n}=h_2(n),
\end{align*}
 where $c_2=2c_u^2K_e+K_\Delta$. Notice that 
\[ h_2(n)=r^nc_3-c_2/\floor{T_n}\geq r^nc_3-c_22r^{2n},\] with $c_3=(1-r)(1-c_uK/K_1)>0$ and
$r<1$, so
if $n$ is large enough $h_2(n)$ is positive. 

We have just shown that on the event $A_n$ we have
\[ B_n\cap A_{n+1}^c\subseteq \{W_{\tau_n,\tau_{n+1}}\geq h_2(n)\}.\] If we let
$\varsigma=\inf\{m\geq\tau_n:W_{\tau_n,m}\geq h_2(n)\}$ then we also have
$A_{n+1}^c\cap B\subseteq \{\varsigma<\infty\}$ on $A_n$. 
An upper bound on $\pr(\varsigma<\infty|A_n)$ can be calculated analogously
to the bound on $\pr(\hat\tau<\infty|A_n)$ in (\ref{howto}).
This yields an upper 
bound on $\pr(A_{n+1}^c|A_n)$ given by
\begin{align*}
 \pr(A_{n+1}^c|A_n) &=\pr(A_{n+1}^c\cap B_n^c|A_n)+\pr(A_{n+1}^c\cap B_n|A_n) \\
&\leq \pr(B_n^c|A_n)+\pr(\varsigma_n<\infty|A_n) \\
&\leq \frac{K_\Delta^2}{T_nh_1^2(n)}+\frac{K_\Delta^2}{T_nh_2^2(n)} \\
&=\frac{K_\Delta^2}{T_nr^{2n}}\left(\frac{1}{i_1^2(n)}+\frac{1}{i_2^2(n)}\right),
\end{align*}
where
\begin{align*}
 i_1^2(n)&=\left(\gamma-1-\frac{c_1}{r^n\floor{T_n}} \right)^2\to(\gamma-1)^2, \\
 i_2^2(n)&=\left(c_3-\frac{c_2}{r^n\floor{T_n}} \right)^2\to c_3^2, \\
\end{align*}
as $n\to\infty$ since $\frac{1}{r^n\floor{T_n}}<r^n2r^{2n}\to 0$. Thus we can get the  bound
\[ \pr(A_{n+1}^c|A_n)\leq \frac{C}{[g(r)]^{2n}}, \]
for some constant $0<C<\infty$.  So,
\begin{align*}
 \pr(\tau_n>T_n,\;\forall n\geq N)&=\pr(\tau_N<\infty)\prod_{n\geq N}[1-\pr(A_{n+1}^c|A_n)]>0, 
\end{align*}
since the product converges as 
\[ \sum_{n\geq N}\pr(A_{n+1}^c|A_n)\leq \frac{C[g(r)]^2}{[g(r)]^{2N}(g(r)^2-1)}<\infty.\]
\end{proof}

\section{Generalized Pólya urns}
Now, we will apply these result to determine the limiting fraction
of balls in two related urn models. The stochastic approximation
machinery makes this fairly easy albeit hard work since the calculations
to verify the required properties can be rather lengthy.

\subsection{Evolution by one draw}
We now return to the model defined in Section \ref{1-draget}. 
An urn has $W_n$ white and $B_n$ black balls after the $n$'th draw. 
Each draw consists of drawing one ball uniformly from the 
contents of the urn, noticing the color and replacing it along with
additional balls according to the replacement matrix
\begin{align}\label{rep}  & \quad\; \text{W}  \,\;\; \text{B} \nonumber \\
\begin{array}{c}
    \text{W} \\ \text{B} 
   \end{array} &
\left( \begin{array}{c c}
        a & b \\ c & d 
       \end{array} \right), \quad
\begin{array}{r}
 \mbox{where}\;\min\{a,b,c,d\}\geq 0\phantom{,} \\
\mbox{and}\;\max\{a,b,c,d\}>0,
\end{array}
\end{align}
so that, e.g.\ a black ball is replaced along with $c$ additional white and $d$ additional black
balls. The initial values $W_0=w_0>0$ and
$B_0=b_0>0$ are considered fixed, although this makes
no difference to the distribution of the limiting fraction
of white balls, except when $a=d$ and $b=c=0$ as we will see later.

$\wh$ and $\bl$ denote the indicators of getting a white and black ball in draw
$n$, respectively. We define $T_n=W_n+B_n$ and $Z_n=W_n/T_n$ and $Y_{n+1}$
implicitly by $\Delta Z_n=Y_{n+1}/T_{n+1}$, which, after rewriting, gives
\begin{align}\label{yy}
Y_{n+1}=(c+d-a-b)Z_n\wh+[(a-c)\wh-(c+d)Z_n]+c. 
\end{align}
With $Y_{n+1}$ written on this form it is easy to see 
that the drift function 
$f(Z_n)=\ex_n Y_{n+1}$ is given by
\begin{align}\label{ff}
 f(x)=(c+d-a-b)x^2+(a-2c-d)x+c.%=\alpha x^2+\beta x+c.
\end{align}
By defining $U_{n+1}=Y_{n+1}-f(Z_n)$ and $\gamma_n=1/T_n$ we arrive at the stochastic approximation
representation
\[ \Delta Z_n=\gamma_{n+1}\big[f(Z_n)+U_{n+1}\big]. \]

Clearly,
$f$ and $U_n$ are bounded since $Z_n\in [0,1]$, so that conditions
(ii) and (iii) of Definition \ref{def} are satisfied. 

\vspace{0.3cm}
\noindent\textbf{Condition (i):}\\
Recall that $\gamma_n=1/T_n$. Define
\begin{equation}\label{tminmax} \tmin=\min\{a+b,c+d\}\quad\mbox{and}\quad\tmax=\max\{a+b,c+d\}.\end{equation}
Assume $\tmin>0$, then 
\begin{align*}
 T_n&\leq T_0+n\tmax \Longrightarrow n\gamma_n\geq \frac1{T_0/n+\tmax}\geq \frac1{T_0+\tmax}>0 \\
 T_n&\geq T_0+n\tmin \Longrightarrow n\gamma_n\leq \frac1{T_0/n+\tmin}< \frac1{\tmin}<\infty. 
\end{align*}
Throughout we will assume that $\tmin>0$ and handle the case $\tmin=0$ separately.

\vspace{0.3cm}
\noindent\textbf{Condition (iv):}\\
To verify condition (iv) of Definition \ref{def} we calculate
the expected value of 
\[ \frac{U_{n+1}}{T_{n+1}}=\frac{a-(a+b)Z_n-f(Z_n)}{T_n+a+b}\wh
+\frac{c-(c+d)Z_n-f(Z_n)}{T_n+c+d}\bl.   \]
\begin{align*}
 \ex_n\left[\frac{U_{n+1}}{T_{n+1}}\right] 
&= \frac{a-c+(2c+d-2a-b)Z_n+(a+b-c-d)Z_n^2}{T_n+a+b}Z_n \\
&\quad\quad\quad +\frac{(c-a)Z_n+(a+b-c-d)Z_n^2}{T_n+c+d}(1-Z_n) \\
&=\frac{(a-c)Z_n+(2c+d-2a-b)Z_n^2+(a+b-c-d)Z_n^3}{T_n+a+b} \\
&\quad\quad\quad +\frac{(c-a)Z_n+(2a+b-2c-d)Z_n^2+(c+d-a-b)Z_n^3}{T_n+c+d}\\
&=\big[C_1Z_n+C_2Z_n^2+C_3Z_n^3\big]\frac{c+d-a-b}{(T_n+a+b)(T_n+c+d)}
\end{align*}
for coefficients $C_1=a-c,C_2=2c+d-2a-b$ and $C_3=a+b-c-d$. So, there is a constant $K_e$
(depending on $a,b,c,d$) such that
\[ \left|\ex_n\left[\frac{U_{n+1}}{T_{n+1}}\right]\right|\leq \frac{K_e}{T_n^2}. \]

\vspace{0.3cm}
\noindent\textbf{The error function}\\
In order to apply Theorems \ref{Pem} and \ref{renlund} via 
verification of condition (\ref{v}) and (\ref{ukrav}) we need to calculate what
we will call the  \emph{error function}
\begin{equation}\label{error}
 \e(Z_n) =\ex_n U_{n+1}^2.
\end{equation}
One sees from (\ref{yy}) and (\ref{ff}) that
\begin{equation*}
 U_{n+1}=Y_{n+1}-f(Z_n)=(\wh-Z_n)\Psi(Z_n),
\end{equation*}
where $\Psi(Z_n)=a-c+(c+d-a-b)Z_n$ so that 
\begin{equation*}
 \e(Z_n)=\ex_n[U_{n+1}^2]=Z_n(1-Z_n)[\Psi(Z_n)]^2.
\end{equation*}

\subsubsection{Limit points}
To determine the limits points of the fraction of white balls in this urn model
we know from Theorem \ref{main} that we need to look at zeros of 
\begin{equation}\label{zeros} f(x)=\alpha x^2+\beta x+c, \end{equation}
where $\alpha=c+d-a-b$ and $\beta=a-2c-d$. First notice that 
\begin{equation}\label{01}
 f(0)=c\geq 0\quad\quad\mbox{and}\quad\quad f(1)=-b\leq 0 
\end{equation}
so that, by the continuity and differentiability
of $f$, there must be a point $x^*\in[0,1]$ such that $f(x^*)=0$ and 
$f'(x^*)\leq 0$, see Fig.\ \ref{pic}. A unique zero 
must be the convergence point of the process $\{Z_n\}$ and if more than
one zero exists, we must check which one is stable (if any). 

\begin{figure}[htb] \label{pic}\begin{center}
 \includegraphics[scale=0.5]{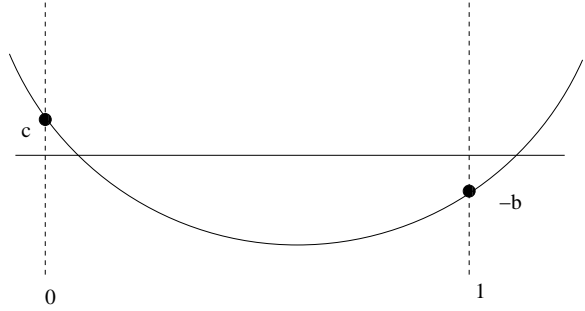}
\end{center}\caption{Schematic picture of $f(x)$ when $\alpha>0$.}\end{figure}

We will look at the possible difficult zeros $x_u$, i.e.\ the ones that
are unstable and where the error terms
are vanishing, in the sense that $\e(x_u)=0$, recall (\ref{error}).

First, $\e(x)\equiv 0$ if and only if $a=c$ and $b=d$. This is not surprising since
there will be no error terms when there is no randomness; this is the urn scheme where $a$
white and $b$ black balls are added whatever color is drawn. The drift function is then
$f(x)=-(a+b)x+a$ so that $x^*=a/(a+b)$ is a unique (stable) zero. 

It follows from (\ref{01}) that if at most two zeros exist\footnote{If there
are more than two zeros then $f$ is identically zero, since $f$ is
a polynomial of order at most 2.} and one of these
is in $(0,1)$ then that one is stable. Hence, an unstable zero, if it exists, must be at
the boundary. By symmetry between colors we need only consider unstable zeros
at the origin. To that end set $c=0$ so that $f(x)=\alpha x^2+\beta x$, with
$\alpha=d-a-b$ and $\beta=a-d$,  has the property $f(0)=0$. In order
for the origin to be unstable we need parameters to make $f(x)\geq 0$ when
$x$ is very small. We need to consider two cases: 

\textbf{(i)} $\beta=0$ but $\alpha\geq 0$. This can only happen if $a=d$ and $b=0$ i.e.\
$f(x)\equiv 0$. Having $f\equiv 0$ makes the sequence $\{Z_n\}$ a
(bounded) martingale and hence a.s.\ convergent. This is in fact the classical 
Pólya-Eggenberger urn model where it is well known
that $Z_n$ converges a.s.\ to a random variable that has a beta distribution
with parameters $w_0/a$ and $b_0/a$, see e.g.\ Theorem 2.2 of \cite{Fre65} or
Theorem 3.2 of \cite{Mah08}.

\textbf{(ii)} $\beta>0$, i.e.\ $a>d$ and $b\geq 0$ arbitrary. The origin is an unstable zero
and not a convergence point. To see why, we need only notice 
that $\e(x)$ certainly can be bounded as (constant)$\cdot x$ and the same is
true of $f(x)$. Considering Remark \ref{notstop}, $W_n\to\infty$ is clear since $a>0$ does imply that
white balls are reinforced infinitely often. Hence, Theorem \ref{renlund} is applicable.

\vspace{0.3cm}
\noindent\textbf{Loose ends}\\
It remains to check what happens if $\tmin=\min\{a+b,c+d\}=0$
(but $a+b+c+d>0$). 
If $c+d=0$ then it is clear that $Z_n\to a/(a+b)$ a.s.\ which is the
unique zero of the driftfunction $f(x)=-(a+b)x^2+ax$. 

The case $a+b=0$ is symmetric.

\vspace{0.3cm}
So we have proved the following.
\begin{theorem}\label{h1}
 Consider the Pólya urn scheme with replacement matrix (\ref{rep}), starting with
a positive number of balls of each color. Then, the limit of the
fraction of balls exists a.s.
Furthermore, apart from the case when $a=d$ and $b=c=0$, in which the
fraction of white balls tends a.s.\ to a beta distribution, the a.s.\
limiting random variable has a one point distribution at $x^*$. This point $x^*$ is a zero  of
(\ref{zeros}) in $[0,1]$ and if two such points exists it has the additional
property that $f'(x^*)<0$.
\end{theorem}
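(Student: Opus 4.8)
The plan is to recognize the fraction process $\{Z_n\}$ as a stochastic approximation algorithm in the sense of Definition \ref{def} and then read the conclusion off the general theory. First I would verify the four conditions of that definition. Conditions (ii) and (iii) come for free, since $Z_n\in[0,1]$ forces both $f$ and $U_n$ to be bounded. For condition (i) one assumes $\tmin>0$ and uses the linear bounds $T_0+n\tmin\leq T_n\leq T_0+n\tmax$ to trap $n\gamma_n=n/T_n$ between two positive constants. Condition (iv) is the only genuinely computational step: expanding $\ex_n[U_{n+1}/T_{n+1}]$ exhibits it as a cubic in $Z_n$ divided by $(T_n+a+b)(T_n+c+d)$, which is manifestly $\bigo(T_n^{-2})$. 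Once Definition \ref{def} is in force, Theorem \ref{main} immediately yields that $\lim_n Z_n$ exists a.s.\ and lies in $Q_f$, which settles the existence assertion.

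Next I would pin down which zero is selected. Since $f$ is the quadratic (\ref{ff}) with $f(0)=c\geq 0$ and $f(1)=-b\leq 0$, a zero $x^*\in[0,1]$ with $f'(x^*)\leq 0$ always exists. The exceptional case $a=d$, $b=c=0$ is disposed of separately: there $f\equiv 0$, so $\{Z_n\}$ is a bounded martingale and converges to a beta-distributed limit by the classical P\'olya--Eggenberger result. In every other case $f$ is a genuine (nonzero) polynomial of degree at most two and thus has at most two zeros. If only one lies in $[0,1]$, Theorem \ref{main} forces $Z_n$ to converge to it. If two zeros are present, the sign pattern dictated by $f(0)\geq 0$ and $f(1)\leq 0$ shows that any zero in the interior is stable, so the only candidate unstable zero must sit on the boundary.

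It then remains to exclude the unstable boundary zero as a limit point, and this is where I expect the main work to lie. By the color symmetry it suffices to treat the origin; setting $c=0$ and discarding the degenerate $f\equiv 0$ case leaves $\beta>0$, which makes $0$ strictly unstable. The plan is to invoke Theorem \ref{renlund}: from the error function $\e(x)=x(1-x)[\Psi(x)]^2$ one checks that $\ex_n U_{n+1}^2=\bigo(x)$ and $[f(x)]^2=\bigo(x)$ near $0$, verifying (\ref{ukrav}) and (\ref{fkrav}), while (\ref{xkrav}) follows from Remark \ref{notstop}, since $a>0$ reinforces white balls infinitely often and hence $W_n\to\infty$. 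Theorem \ref{renlund} then gives $\pr(Z_n\to 0)=0$, so the a.s.\ limit must be the stable zero, which carries $f'(x^*)<0$ whenever two zeros exist. The final loose end is $\tmin=0$: if $c+d=0$ (respectively $a+b=0$) a direct computation shows $Z_n\to a/(a+b)$ (respectively the symmetric value). The delicate points throughout are the boundary growth estimates (\ref{ukrav})--(\ref{xkrav}) and keeping the case distinction clean enough that an unstable zero is never allowed to drift into the open interval.
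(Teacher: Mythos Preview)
Your proposal is correct and follows essentially the same route as the paper: verify Definition~\ref{def} (with the same computations for (i) and (iv)), invoke Theorem~\ref{main} for existence, use the sign pattern $f(0)=c\geq 0$, $f(1)=-b\leq 0$ to locate the stable zero and force any unstable zero to the boundary, dispose of the classical P\'olya case $f\equiv 0$ separately, and then apply Theorem~\ref{renlund} (via Remark~\ref{notstop}) to rule out the strictly unstable boundary zero, finishing with the $\tmin=0$ loose end. The case split and the choice of tools match the paper's argument step for step.
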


The author does not expect that Theorem \ref{h1} is new (although we have
never seen it written down). In \cite{Gou89} one finds
a similar proposition with less generality as it is demanded that
$a+b=c+d$ although it has the benefit that $a$ and $d$ could be
negative. 

It seems likely that  Theorem \ref{renlund} could be proved using only
the embedding method of Athreya and Karlin into multi-type branching processes, 
see e.g.\ chapter V of \cite{AK68}, but we have not attempted it. 
However, the model in the next section does not fit this embedding method.

In \cite{Plo} a very general extension of the Pólya urn is studied. The urn
may have balls of several colors and balls are drawn with a probability 
according to a function $h$ of the urn content.  At any stage a 
replacement policy is randomly selected from 
a number of different policies, which may include nonbounded
random variables, depending on the colors drawn. However, their
convergence result (Theorem 2.1) is inapplicable to several cases 
in our study due
to their assumption of a unique zero of the resulting drift function. 

Any reader interested in other types of limit theorems for this model 
is advised to consult \cite{AK68}, \cite{Jan04} and \cite{Jan06}.

\subsection{Evolution by two draws}
Again, we will consider an urn with balls of two colors but
now we turn our attention to an urn scheme where two balls are drawn
simultaneously and reinforcement is done according to which of the
three possible combinations of colors this results in. $W_n$ and $B_n$ 
keep their meaning from the previous section but we now assume that $w_0,b_0\geq 2$ so that all
3 combinations of draws have positive probability from the start.

The replacement
matrix becomes 
\begin{align}\label{rep2}  & \quad\; \text{W}  \,\;\; \text{B} \nonumber \\
\begin{array}{c}
    \text{WW} \\ \text{WB} \\ \text{BB}
   \end{array} &
\left( \begin{array}{c c}
        a & b \\ c & d \\ e & f
       \end{array} \right), 
\begin{array}{r}
 \mbox{where}\;\min\{a,b,\ldots,f\}\geq 0\phantom{.} \\
\mbox{and}\;\max\{a,b,\ldots.f\}>0.
\end{array}
 \end{align}
From this we see, e.g.\ that if we draw a white and a black ball these will be replaced
along with additional $c$ white and $d$ black balls.

This model has been studied e.g.\ in Chapter 10 of \cite{Mah08}, where a central limit theorem
for the number of white balls (under parameter constraints, see
remark after Theorem \ref{2drag}) is presented as well as applications 
of the model.

Let
$\ww, \wb$ and $\bb$ be the indicators of the events that draw $n$ results in two white,
one black and one white or two black balls, respectively. Since balls are drawn simultaneously
we have
 \begin{align}\label{forvant} 
\ex_n\ww &= \frac{W_n(W_n-1)}{T_n(T_n-1)} = Z_n^2  -\frac{Z_n(1-Z_n)}{T_n-1}, \nonumber \\
\ex_n\wb &= \frac{2W_nB_n}{T_n(T_n-1)} = 2Z_n(1-Z_n)  + 2\frac{Z_n(1-Z_n)}{T_n-1},  \\
\ex_n\bb &= \frac{B_n(B_n-1)}{T_n(T_n-1)} = (1-Z_n)^2  -\frac{Z_n(1-Z_n)}{T_n-1}. \nonumber
 \end{align}

\begin{rem}
If two balls were drawn with replacement we would have the simpler situation
\[ \ex_n\ww=Z_n^2,\quad\ex_n\wb=2Z_n(1-Z_n)\quad\mbox{and}\quad\ex_n\bb =(1-Z_n)^2.\]
As $T_n\to\infty$ the rightmost parts of (\ref{forvant}) suggest that for large $n$ there is little
difference in sampling the two balls with or without replacement. Sampling without replacement
will make the calculations messier but with the added benefit that is it easy
to see that the result, Theorem \ref{2drag} below, will remain valid 
in the simpler case. In the calculations, terms named ``$R_i$'' 
or ``$R_i(k)$'' are terms that would be zero if we drew with replacement.
\end{rem}

The number of white balls $W_n$ and the total number of balls $T_n$ evolve recursively as
\begin{align*}
 W_{n+1} &= W_n+a\ww+c\wb+e\bb \quad\quad\mbox{and} \\
 T_{n+1} &=T_n+(a+b)\ww+(c+d)\wb+(e+f)\bb.
\end{align*}
Hence, the increments of the fraction of white balls $Z_n$ can be calculated
as
\[ \Delta Z_n= \frac1{T_{n+1}}\big([a-(a+b)Z_n]\ww+[c-(c+d)Z_n]\wb+[e-(e+f)Z_n]\bb\big), \]
which we again denote as $\Delta Z_n=Y_{n+1}/T_{n+1}$. From the above and (\ref{forvant}) we
calculate
\begin{align*}
 \ex_n Y_{n+1}=g(Z_n)+R_n,
\end{align*}
where
\begin{align*}
 g(Z_n)&=\alpha Z_n^3+\beta Z_n^2+\gamma Z_n+e\quad\quad\mbox{and} \\
 R_n &=-\frac{Z_n(1-Z_n)}{T_n-1}\left(a-2c+e+\alpha Z_n\right)
\end{align*}
with
\begin{align}\label{coeff}
 \alpha &=-a-b+2c+2d-e-f, \nonumber \\
 \beta &=a-4c-2d+3e+2f, \quad\mbox{and}  \\
 \gamma &= 2c-3e-f. \nonumber
\end{align}
Setting $U_{n+1}=Y_{n+1}-g(Z_n)$ gives us the stochastic approximation
representation $\Delta Z_n=\frac{1}{T_{n+1}}[g(Z_n)+U_{n+1}]$. 
It is clear that (ii) and (iii) of Definition \ref{def} are satisfied.

\vspace{0.3cm}
\noindent\textbf{Condition (i):}
Define
\begin{equation}\label{tminmax2}
 \tmin=\min\{a+b,c+d,e+f\}\quad\mbox{and}\quad\tmax=\{a+b,c+d,e+f\}.
\end{equation}
Assume $\tmin>0$, then
\begin{align*}
 T_n&\leq T_0+n\tmax \Longrightarrow n\gamma_n\geq \frac1{T_0+\tmax}>0 \\
 T_n&\geq T_0+n\tmin \Longrightarrow n\gamma_n< \frac1{\tmin}<\infty. 
\end{align*}
Throughout we will assume that $\tmin>0$ and handle the case $\tmin=0$ separately.

\vspace{0.3cm}
\noindent\textbf{Condition (iv):} \\
We write the expectation of $U_{n+1}/T_{n+1}$ as 
\begin{align}\label{poly}
 \ex_n\left[\frac{Y_{n+1}-g(Z_n)}{T_{n+1}}\right]=
\frac{p_1(Z_n)}{T_n+a+b}+\frac{p_2(Z_n)}{T_n+c+d}+\frac{p_3(Z_n)}{T_n+e+f},
\end{align}
where each $p_j$ in (\ref{poly}) has the form 
\[ p_j= \sum_{k=0}^5 C_k^{(j)}Z_n^k+R_j(n) \]
with coefficients $C_k^{(j)}$ given by
Table \ref{koeff}. As an example, $p_1$ is calculated from
$\ex_n[a-(a+b)Z_n-g(Z_n)]\ww$.
\begin{table}[htb]\begin{center}\begin{tabular}{|c||c|c|c|} \hline 
$k$ & $C_k^{(1)}$ &$C_k^{(2)}$ &  $C_k^{(3)}$ \\ \hline \hline
0 & 0 & 0 & 0\\ \hline
1 & 0  & $2c-2e$ & $-\gamma-e-f$ \\ \hline
2 & $a-e$  & $-2\gamma-4c-2d+2e$ & $2\gamma+2e+2f-\beta$ \\ \hline
3 & $-\gamma-a-b$ &  $2\gamma+2c+2d-2\beta$ & $2\beta-\alpha-\gamma-e-f$ \\ \hline
4 & $-\beta$ & $2\beta-2\alpha$ & $2\alpha-\beta$ \\ \hline
5 & $-\alpha$ & $2\alpha$ & $-\alpha$ \\ \hline
\end{tabular}\caption{Coefficients of the polynomials $p_j$, $j=1,2,3$.}\label{koeff}\end{center}\end{table}
Each $R_j(n)$ is a polynomial in $Z_n$ divided by $T_n-1$, more precisely 
\begin{align*}\label{rn}
 R_1(n) &=\frac{Z_n(1-Z_n)}{T_n-1}[(e-a)+(\gamma+a+b)Z_n+\beta Z_n^2+\alpha Z_n^3], \nonumber \\
 R_2(n) &=\frac{Z_n(1-Z_n)}{T_n-1}2[(c-e)-(\gamma+c+d)Z_n-\beta Z_n^2-\alpha Z_n^3], \nonumber \\
 R_3(n) &=\frac{Z_n(1-Z_n)}{T_n-1}[(\gamma+e+f)Z_n+\beta Z_n^2+\alpha Z_n^3]. \nonumber
\end{align*}
We want to show that $|\ex_n U_{n+1}/T_{n+1}|=\bigo (T_n^{-2})$. 
The $R_j$ terms clearly satisfy $|R_j(n)/T_{n+1}|=\bigo (T_n^{-2})$. 

Recalling (\ref{coeff}), and plugging  these in, shows that for 
each $k=0,1,\ldots,5$ we have $C_k^{(1)}+C_k^{(2)}+C_k^{(3)}=0$.
This gives us  
\begin{align*}
\mbox{(\ref{poly})}&=\sum_{k=0}^5\left[\frac{C_k^{(1)}}{T_n+a+b}+\frac{C_k^{(2)}}{T_n+c+d}+\frac{C_k^{(3)}}{T_n+e+f}\right]
         Z_n^k+ \mathscr R(n)\\
&=\sum_{k=0}^5\left[\frac{[C_k^{(1)}+C_k^{(2)}+C_k^{(3)}]T_n^2+c_1^{(k)}T_n+c_2^{(k)}}{(T_n+a+b)(T_n+c+d)(T_n+e+f)}\right]
         Z_n^k+ \mathscr R(n)\\
&=\sum_{k=0}^5\left[\frac{c_1^{(k)}T_n+c_2^{(k)}}{T_n^3+c_3^{(k)}T_n^2+c_4^{(k)}T_n+c_5^{(k)}} \right]
         Z_n^k+ \mathscr R(n),
\end{align*}
where $\mathscr R(n)=\frac{R_1(n)}{T_n+a+b}+\frac{R_2(n)}{T_n+c+d}+\frac{R_3(n)}{T_n+e+f}$ and
$c_1^{(k)},\ldots,c_5^{(k)}$ are
some constants  whose exact value is of no importance. 
This makes it clear that \[ |\ex_n U_{n+1}/T_{n+1}|=\bigo(T_n^{-2}).\] 

\vspace{0.3cm}
\noindent\textbf{The error function} \\
In order to apply Theorems \ref{Pem} and \ref{renlund} via 
verification of condition (\ref{v}) and (\ref{ukrav}) we need to 
calculate the second moment of
\begin{align*}
 U_{n+1} = Y_{n+1}-g(Z_n) &= [a-(a+b)Z_n][\ww-Z_n^2] \\ 
 &+[c-(c+d)Z_n][\wb-2Z_n(1-Z_n)] \\
&+[e-(e+f)Z_n][\bb-(1-Z_n)^2].
\end{align*}
Excruciating calculations show that the error function is given by
\[ \e(Z_n)=\ex_n U_{n+1}^2 = Z_n(1-Z_n)\Psi(Z_n)+R_n, \]
where $R_n$ is a polynomial in $Z_n$ divided by $T_n-1$ (so this
term tends to zero for large $n$) and $\Psi(x)$ is a polynomial
of order 4 given by
\begin{align*}
\Psi (x) &=  (a+b-2c-2d+e+f)^2\cdot x^4+ \\
&[-2(a+b-2c-2d+e+f)(a-2c+e) \\ &\quad +(e+f-a-b)^2-4(e+f-c-d)^2]\cdot x^3+ \\  
&[(a-2c+e)^2+2(a-e)(e+f-a-b) \\ &\quad-8(c-e)(e+f-c-d)+2(e+f-c-d)^2]\cdot x^2+ \\
&[(a-e)^2-4(c-e)^2+4(c-e)(e+f-c-d)]\cdot x + \\ &2(c-e)^2,
\end{align*}
which is too complicated a formula to work with. Working through the expression one can arrive at the 
form
\begin{equation}\label{psi} 
\Psi(x)=2x^2(A_x+C_x)^2+x(1-x)B_x^2+2(1-x)^2C_x^2,
\end{equation}
where
\begin{align*}
 A_x &=(-a-b+2c+2d-e-f)x+a-2c+e, \\ B_x&=(e+f-a-b)x+a-e\quad \mbox{and} \\
 C_x&=(e+f-c-d)x+c-e,	 
\end{align*}
and the relation 
\begin{equation}\label{rel}
A_x=B_x-2C_x 
\end{equation}
 holds.

\subsubsection{Limit points}
To determine the limiting fraction of white balls we need to examine
the zeros of 
\begin{equation}\label{zeros2} g(x)=\alpha x^3+\beta x^2+\gamma x +e,\end{equation} where
$\alpha=-a-b+2c+2d-e-f$, $\beta=a-4c-2d+3e+2f$ and
$\gamma=2c-3e-f$. We see that
\begin{equation}\label{endps}
 g(0)=e\geq 0\quad\quad\mbox{and}\quad\quad\quad\quad g(1)=-b\leq 0.
\end{equation}
By continuity and differentiability there will thus exists a point $x^*$
with $f(x^*)=0$ and $f'(x^*)\leq 0$.  A difference with the previous
model, where the urn evolved by a single draw each time, is that
we now have more types of equilibrium points. Previously, we
only encountered unstable zeros on the boundary, which we resolved
with Theorem \ref{renlund}. Now we will also make use of Theorems 
\ref{Pem} and \ref{Pem2}.

\begin{rem}
 There will be urn schemes with a unique zero, e.g.\ the case 
$(a,b,\ldots,f)=(3,2,2,3,1,4)$, where
\[ g(x)=-3x+1, \]
 and $1/3$ is unique. Another example is given by
$(a,b,\ldots,f)=(9,1,2,3,1,7)$ where $g(x)=-8(x-1/2)^3$.

\end{rem}

\begin{rem} \label{attstar}\textbf{On attainability and condition $\bigstar$ of
Theorem \ref{Pem2}} \\ 
Consider the replacement matrix \ref{rep2}. Let $\hat w_n, \hat m_n$ and 
$\hat b_n$ denote the number of times that draws  up to time $n$ 
has resulted in the combinations $WW, WB$ and
$BB$, respectively.

Then
\[ Z_n=\frac{W_0+a\hat w_n+c\hat m_n+e\hat b_n }{T_0+(a+b)\hat w_n
 +(c+d)\hat m_n+(e+f)b_n)}. \]
Since \[ \pr(\hat w_n=i, \hat m_n=j,\hat b_n=k)>0 \] for any combination
$0\leq i,j,k$ such that $i+j+k=n$, it follows that any open set in
$[L,U]$ is attainable, where 
\[ L=\min\left\{\frac{a}{a+b},\frac{c}{c+d},\frac{e}{e+f} \right\}\quad\mbox{and}\quad U=\max\left\{\frac{a}{a+b},\frac{c}{c+d},\frac{e}{e+f} \right\}. \]

Hence, if $p_s$ is a stable zero of $f$ and $p\in[L,U]$ then the conditions
of Theorem \ref{stable} is fullfilled and $\pr(Z_n\to p_s)>0$.

We can also see that condition $\bigstar$ of Theorem \ref{Pem2} is 
satisfied if $p_t$ is a touchpoint and $p_t\in(L,U)$. Furthermore,
since the drift is continuously differentiable, the slope will tend to zero
close to $p_t$, making Theorem \ref{Pem2} applicable.

We will not attempt to prove that it will always be the case that
neighborhoods of stable points are attainable and
that condition $\bigstar$ is satisfied whenever there is a touchpoint.
Our attempts to do so yields too messy calculations, but it seems 
reasonable that this is true.

In any specific situation there is no problem in verifying these
conditions. 
\end{rem}

\begin{rem}
 There will be urn schemes where the set of stable zeros contains exactly two
points and with unstable zeros in $(0,1)$.
For example the case
$(a,b,\ldots,f)=(15,3,4,1,3,21)$, where
\[ g(x)=-32x^3+48x^2-22x+3=-32(x-1/4)(x-1/2)(x-3/4),   \]
and $1/2$ is unstable whereas $1/4$ and $3/4$ are stable.
Notice that $L=3/24<1/4$ and $3/4<U=15/18$ so that both
stable points are possible convergence points by Remark \ref{attstar}.
\end{rem}

\begin{rem}
Touchpoints may arise. If $(a,\ldots,f)=(35,9,1,1,3,21)$ then
\[ g(x)=-64x^3+80x^2-28x+3=-64(x-1/4)^2(x-3/4), \]
where $1/4$ is a touchpoint and $3/4$ is stable. Notice
that $L=3/24<1/4<3/4<35/44=U$ so that both the stable point
and the touchpoint are possible convergence points
by Remark \ref{attstar}.
\end{rem}

\vspace{0.3cm}
\noindent\textbf{No unstable equilibrium in $(0,1)$ with vanishing error terms.} \\
Now we will examine whether there could exist an unstable zero $x_u$ in $(0,1)$
such that $\e(x_u)=0$, i.e.\ an unstable zero to which we can not apply
Theorem \ref{Pem}. We recall that 
$\e(x)=x(1-x)\Psi(x)+R_n$ where $R_n=\bigo (T_n^{-1})$ and $\Psi$
is given in (\ref{psi}). Hence, we need to look at points $x\in(0,1)$ such that
$\Psi(x)=0$.

First, $\Psi(x)\equiv 0$ if $A_{x}\equiv B_{x}\equiv C_{x}\equiv 0$. It is easy to calculate
and intuitive that this can only happen if $a=c=d$ and $b=d=f$, since this is the case
when there is no randomness involved in the urn scheme, $a$ white and $b$ black balls
are added whatever is drawn. Then $g(x)=-(a+b)x+a$ so that $x^*=a/(a+b)$ is unique.

Next, we need to solve $\Psi(x)=0$ for $0<x<1$. We will do this by going
through the cases when exactly one of $C_x, B_x$ or $A_x$, or
none, is zero. This suffices due to relation (\ref{rel}).

Note, since the drift function is a polynomial of order at most $3$ with
boundary condition (\ref{endps}), the only time
when an unstable $x^*\in(0,1)$ has $g'(x^*)=0$ is when $g(x)\equiv 0$. 
This case is special and will be treated below. Hence,
 we need only verify that if $x^*\in(0,1)$ is a point where
$\e$ vanishes, then $x^*$ is not strictly unstable, i.e.\ that $g'(x^*)\leq 0$.

\vspace{0.3cm}
\noindent\textbf{The case $C_x\equiv 0$}\\
If $C_x\equiv 0$, i.e.\ $c=e$ and $f=d$, then from (\ref{rel}) we have that 
$A_x=B_x=(e+f-a-b)x+a-e$. Then
\[ \Psi(x)=2x^2A_x^2+x(1-x)A_x^2=x(1+x)A_x^2. \]
We assume $B_x$ is not identically zero,
so if $e+f=a+b$ then $B_x=a-e$ is never zero. 
If $e+f\neq a+b$ then $B_{x^*}=0$ for $x^*=\frac{e-a}{e+f-a-b}=\frac{\N}{\D}$.
The drift function $g(x)$ is now
\begin{align*} g(x) &=(-a-b+e+f)x^3+(a-e)x^2-(e+f)x+e \\ &=\D x^3-\N x^2-(e+f)x+e, \end{align*}
so that $g(x^*)=e-\frac{(e+f)(e-a)}{e+f-a-b}$ and thus
$g(x^*)=0$ if $af=eb$. If $e\neq 0$ then
\begin{align*}
 g'(x^*) &=\frac{(e-a)^2}{e+f-a-af/e}-(e+f) \\
 &=(1-a/e)\frac{e^2}{e+f}-(e+f)\leq 0. 
\end{align*}
If $e=0$, then $af=0$ so that $x^*=0$ or $x^*=a/(a+b)$. In 
the latter case we have $g'(\frac{a}{a+b})=-a^2/(a+b)\leq 0$.

\vspace{0.3cm}
\noindent\textbf{The case $B_x\equiv 0$}\\
If $B_x\equiv 0$ then $a=e$ and $f=b$, $A_x=-2C_x$ and
\[ \Psi(x)=2x^2(-C_x)^2+2(1-x)^2C_x^2= C_x^2(4x^2-4x+2), \] 
where $4x^2-4x+2$ has no roots in $(0,1)$. 
We assume that $A_x=-2C_x$
is not identically zero so if $a+b=c+d$ then
$A_x=2(a-c)$ is never zero. If $a+b\neq c+d$
then $A_{x^*}=0$ for $x^*=\frac{a-c}{a+b-c-d}=\frac{\N}{\D}$. Now
\begin{align*} g(x)&=2(-a-b+c+d)x^3+2(2a+b-2c-d)x^2+(2c-3a-b)x+a\\
 &=-2\D x^3+(2\N+2\D)x^2-(2\N+a+b)x+a,
\end{align*}
so that $g(x^*)=a-\frac{(a+b)(a-c)}{a+b-c-d}=0$
if $cb=ad$. If $a\neq 0$ then
\begin{align*} 
g'(x^*) &= -2\frac{(a-c)^2}{a+b-c-cb/a}+a-2c-b \\
&=-2(1-c/a)\frac{a^2}{a+b}+a-b-2c=-\frac{a^2+b^2+2bc}{a+b}\leq 0.
\end{align*}
If $a=0$ then $cb=0$ so $x^*=0$ or $x^*=c/(c+d)$. In the latter
case we have $g'(x^*)=-2cd/(c+d)\leq 0$.

\vspace{0.3cm}
\noindent\textbf{The case $A_x\equiv 0$}\\
If $A_x\equiv 0$ then $2c=a+e$, $2d=b+f$, $B_x=2C_x$ and
\[ \Psi(x)=2x^2C_x^2+x(1-x)(2C_x)^2+2(1-x)^2C_x^2=2C_x^2.\]
  We assume that
$B_x=2C_x$ is not identically zero so if $e+f-a-b=0$ then
$B_x=a-e$ is never zero. If $e+f\neq a+b$ then
$B_{x^*}=0$ for $x^*=\frac{e-a}{e+f-a-b}
\frac{\N}{\D}$ and
\begin{align*}
 g(x)&=(e+f-a-b)x^2+(a-2e-f)x+e \\
&=\D x^2-(\N+e+f)x+e,
\end{align*}
so that \[ g'(x^*)=\N-e-f=-a-f\leq 0.\] 

\vspace{0.3cm}
\noindent\textbf{The case $A_x,B_x,C_x$ not $\equiv 0$}\\
Suppose $a\neq e\neq c$, $a+b\neq e+f\neq c+d$, $2c\neq a+c$ and
$a+b+c+d\neq 2c+2d$. The only chance of having $\Psi(x)=0$, for $x\in(0,1)$,
is for $A_x, B_x$ and $C_x$ to be zero simultaneously.
A common zero $x^*$ of $A_x, B_x$ and $C_x$ when none of  these is
identically zero imposes 
\[ x^*=\frac{e-a}{e+f-a-b}=\frac{e-c}{e+f-c-d}=\frac{a-2c+e}{a+b-2c-2d+e+f} \]
which is the case whenever $e(b-d)+c(f-b)+a(d-f)=0$. 

Setting $x^*=(e-a)/(e+f-a-b)$ and $d=[b(e-c)+f(c-a)]/(e-a)$ and using
Maple yields the simple expression
\[ g(x^*)=\frac{af-eb}{e+f-a-b}, \]
which is zero if $af=eb$. The derivative simplifies to
\[ g'(x^*)=-a\frac{e-a}{e-a+f-b}-(f+2c)\frac{f-b}{f-b+e-a}. \]
If $a=0$ then $be=0$ so $g'(x^*)$ is $-(f+2c)$ or $-(f+2c)\frac{f}{f+e}$.
If $a\neq 0$  we can write $f=be/a$ and 
\[ g'(x^*)=-a\frac{a}{a+b}-(f+2c)\frac{b}{a+b}\leq 0. \]

So we can determine that there is no strictly unstable zero of $g(x)$ in $(0,1)$ 
such that the error terms are vanishing. 

\vspace{0.3cm}
\noindent\textbf{Unstable boundary points}\\
Next, we check the boundary. By symmetry of colors we need only consider
an unstable zero of $g$ at the origin. To that end set 
$e=0$ so that $g(x)=\alpha x^3+\beta x^2+\gamma x$ has $g(0)=0$. We check the cases 
where $g\geq 0$ close to the origin: \\
\textbf{(a)} $\gamma=0$, $\beta=0$ and $\alpha\geq 0$ is only possible
if $2c=f$, $2d=a$ and $b=0$ so that $\alpha=0$ and hence
$g(x)\equiv 0$. It is then, in some sense, the 2-draw version
of the classical Pólya urn. The special case of 
$2c=2d=a=f$ has been studied in \cite{MC05} and they show that 
the limiting variable has an absolutely continuous distribution.
They also include a simulation study that indicates that the 
limiting distribution ``resembles'' the beta distribution. 

By Theorem \ref{Pem}, 
we may only conclude 
that the limiting distribution has no point masses on $(0,1)$. \\
\textbf{(b)} $\gamma=0$ and $\beta>0$, i.e.\ $2c=f$ and $a>2d$.  \\
\textbf{(c)} $\gamma>0$, i.e.\ $2c>f$. \\ In both (b) and (c)
the conditions of Theorem \ref{renlund} are satisfied. $kZ_k$ can
be made arbitrarily big since $W_k\to\infty$. In the second case this
is due to $c>0$. Since the combination $WB$ will always be drawn
infinitely often, it follows that white balls will tend to infinity.
In the first case, either $c>0$ and we are done, or $c=f=e=0$
and $a>0$ so that the combination $WW$ will be drawn infinitely
often.

Also, both $g(x)$ and $\e(x)$ behave like 
(constant)$\cdot x$ when $x$ is close to zero. Thus, convergence
to a strictly unstable boundary point is impossible.

\vspace{0.3cm}
\noindent\textbf{Loose ends}\\
Here we examine what happens if $\tmin=\min\{a+b,c+d,e+f\}=0$.

\vspace{0.1cm}
\noindent\textbf{1.}
$c=d=e=f=0$ has drift $g(x)=-(a+b)x^3+ax^2$. It is clear that
$Z_n$ does converge to $a/(a+b)$ a.s.  (i.e.\ the stable zero of $g$)
since any draw that alters the urns composition does so by increasing
the number of white balls by $a$ and the numbers of black balls by $b$.

\vspace{0.1cm}
\noindent\textbf{2.} 
$a=b=c=d=0$ is symmetric to the above case $c=d=e=f=0$; $Z_n$ will
converge to the stable zero of $g$.

\vspace{0.1cm}
\noindent\textbf{3.}
$a=b=e=f=0$ has \[g(x)=2(c+d)x^3-(4c+2d)x^2+2cx=2x(1-x)[c-(c+d)x]\]
with $c/(c+d)$ being the only stable zero. It is clear that 
$Z_n$ converges to this point a.s.

\vspace{0.1cm}
\noindent\textbf{4.}
$a=b=0$ and $\min\{c+d,e+f\}>0$  so that nothing happens when 
two white balls
are drawn (except that they are replaced in the urn). 
Let $\tau_1=\inf\{k\geq 1: T_k>T_{k-1}\}$ and for $n\geq 1$
\[ \tau_{n+1}=\inf\{k>\tau_n: T_k>T_{\tau_n}\}. \]
By looking at the sequence $Z_{\tau_n}$ we ignore the times when
two white balls are drawn. This makes no difference to the 
limit  since $Z_{\tau_n+k}=Z_{\tau_n}$ for 
$0\leq k<\tau_{n+1}-\tau_n$.

 However, since
\begin{align*}
 \ex_{\tau_n} \hbb_{\tau_{n+1}}&=
 \frac{ 
\frac{B_{\tau_n}(B_{\tau_n}-1)}{T_{\tau_n}(T_{\tau_n}-1)}
}{
\frac{B_{\tau_n}(B_{\tau_n}-1)}{T_{\tau_n}(T_{\tau_n}-1)}+
\frac{2W_{\tau_n}B_{\tau_n}}{T_{\tau_n}(T_{\tau_n}-1)}
}=
\frac{B_{\tau_n}-1}{B_{\tau_n}+2W_{\tau_n}-1}\quad\mbox{and} \\
\ex_{\tau_n}\hwb_{\tau_{n+1}} &=\frac{2W_{\tau_n}}{W_{\tau_n}+2B_{\tau_n}-1},
\end{align*}
it is more convenient to define $\hat T_n=B_{\tau_n}+2W_{\tau_n}-1$ and
$\hat Z_n=2W_{\tau_n}/\hat T_n$. It is straightforward to compute
$\Delta\hat Z_n=\hat Y_{n+1}/\hat T_{n+1}$ where
\[ \hat Y_{n+1}=\hwb_{\tau_{n+1}}(2c-(2c+d)\hat Z_n)+\hbb_{\tau_{n+1}}(2e-(2e+f)\hat Z_n), \]
so that 
\[ \hat g(\hat Z_n) =\ex_{\tau_n} \hat Y_{n+1}=(2e+f-2c-d)\hat Z_n^2+(2c-4e-f)\hat Z_n+2e. \]
We also get the error function, with $\hat U_{n+1} =\hat Y_{n+1}-g(\hat Z_n)$, as
\[ \e(\hat Z_n) =\ex_{\tau_n}(\hat U_{n+1}^2)=[(2e+f-2c-d)\hat Z_n+2c-2e]^2\hat Z_n(1-\hat Z_n). \]
Since we have assumed $\min\{c+d,e+f\}>0$ we know that (i) of Definition (\ref{def})
is satisfied, and one easily verifies (ii)-(iv).

As $g(0)=2e\geq 0$ and $g(1)=-d\leq 0$ any unstable zero of $\hat g$
must be on the boundary $\{0,1\}$. We can apply 
Theorem \ref{renlund} to conclude that $\hat Z_n$ will not converge
to an unstable boundary point.  

So, we have the original process $Z_n=W_n/(W_n+B_n)$ described by the driftfunction
\begin{align*} 
g(x)&=(2c+2c-e-f)x^3+(-4c-2d+3e+2f)x^2+(2c-3e-f)x+e \\
&=(1-x)[(-2c-2d+e+f)x^2+(2c-2e-f)x+e]
 \end{align*}
and our ''new'' process $\hat Z_n=W_{\tau_n}/(2W_{\tau_n}+B_{\tau_n}-1)$
described by
\begin{align*}
 \hat g(x)=(-2c-d+2e+f)x^2+(2c-4e-f)x+2e.
\end{align*}
Now, as $T_n\to\infty$, the limit $x=\lim_n Z_n$ and $y=\lim_n\hat Z_n$
are related as
\[ y=\frac{2x}{x+1}\quad\mbox{and}\quad x=\frac{y}{2-y}. \]
In particular $x=0$ is equivalent to $y=0$ and $x=1$ is equivalent to 
$y=1$.

A straightforward calculations shows that
\begin{equation}\label{ghatg} \frac 12(1+x)^2\hat g\left(\frac{2x}{x+1}\right)=\frac{g(x)}{1-x}, \end{equation}
so that $\hat g$ and $g$ have the same zeros, with the possible exception of $x=1$.

For $x=1$ we examine two cases:
\begin{itemize}
\item[(i)] If $d>0$ then $\hat g(1)=d>0$, whereas $g(1)=0$ always, so that $1\notin \hat Z_\infty$, which implies $1\notin Z_\infty$.
But $g'(1)=d>0$ so that nonconvergence to $1$ is what we would expect..

\item[(ii)] If $d=0$ then $\hat g(1)=g(1)=0$. We examine the behavior of these close to $1$ via the
calculations 
\begin{align*}\hat g(1-\epsilon)&=\epsilon[2e\epsilon+(2c-f)(1-\epsilon)]\quad\mbox{ and }\\
g(1-\epsilon)&=\epsilon^2[e\epsilon+(2c-f)(1-\epsilon)]. 
\end{align*}
\end{itemize}
Hence $x=1$ is stable/unstable simultaneously for $\hat g$ and $g$.

Differentiating (\ref{ghatg}) for $x\neq 1$ yields
\begin{equation*}
 (1+x)\hat g\left(\frac{2x}{x+1}\right)+\hat g'\left(\frac{2x}{x+1}\right)
=\frac{1}{(1-x)^2}g(x)+\frac{1}{1-x}g'(x),
\end{equation*}
so that at any point $x<1$ where $\hat g$ and $g$ vanishes we have
\[ \hat g'\left(\frac{2x}{x+1}\right)=\frac{1}{1-x}g'(x), \]
i.e.\ $\hat g$ and $g$ have the same stable and strictly unstable points
(if any).
For any $x<1$ such that $\hat g'$ and $g'$ vanishes we also have
\[ \hat g''\left(\frac{2x}{x+1}\right)=\frac{1}{1-x}g''(x), \]
so that $\hat g$ and $g$ have identical touchpoints (if any).

Thus the convergence of $\hat Z_n$ to a stable zero of $\hat g$ implies
the convergence of $Z_n$ to a stable zero of $g$. 
Also, the non-convergence to an unstable zero of $\hat g$ implies the
non-convergence to an unstable zero of $g$.

\vspace{0.1cm}
\noindent\textbf{5.}
$e=f=0$ and $\min\{a+b,c+d\}>0$ is symmetric to 4.

\vspace{0.1cm}
\noindent\textbf{6.}
$c=d=0$ and $\min\{a+b,e+f\}>0$. Define $\tau_n$
as in the previous section and $\hat Z_n=Z_{\tau_n}$. Now
\[\ex_{\tau_n} \hbb_{\tau_{n+1}}=\frac{W_{\tau_n}(W_{\tau_n}-1)}{W_{\tau_n}(W_{\tau_n}-1)+B_{\tau_n}(B_{\tau_n}-1)}\] 
is a difficult expression to work with directly,
so we rewrite it as 
\begin{align*}
 \ex_{\tau_n} \hbb_{\tau_{n+1}}&=\frac{\hat Z_n^2}{\hat Z_n^2+(1-\hat Z_n)^2}+\frac{R_n}{\hat T_n}, \quad\mbox{where} \\
R_n&=-\frac{\hat Z_n(1/2-\hat Z_n)(1-\hat Z_n)}{[\hat Z_n^2+(1-\hat Z_n)^2][\hat Z_n^2+(1-\hat Z_n)^2-1/\hat T_n]}.
\end{align*}
Next, set $\hat Y_{n+1} =\hat T_{n+1}\Delta \hat Z_n$. Then
\begin{align*}
 \hat Y_{n+1}=(e+f-a-b)\hat Z_n\hbb_{\tau_{n+1}}+(a-e)\hbb_{\tau_{n+1}}-(e+f)\hat Z_n+e,
\end{align*}
so that 
\begin{align*}
 \ex_{\tau_n}\hat Y_{n+1}&=g(\hat Z_n)+\frac{R_n[\hat\alpha \hat Z_n+(a-e)]}{\hat T_n},\quad\mbox{where} \\
g(x)&=\hat\alpha \frac{x^3}{x^2+(1-x)^2}+(a-e)\frac{x^2}{x^2+(1-x)^2}+(-e-f)x+e,\quad\mbox{and} \\
\hat\alpha&=e+f-a-b.
\end{align*}
Next, 
\[ \hat U_{n+1} =\hat Y_{n+1}-g(\hat Z_n)
 =[\hat\alpha \hat Z_n+(a-e)]\left[\hww_{\tau_{n+1}}-\frac{\hat Z_n^2}{\hat Z_n^2+(1-\hat Z_n)^2}\right],
\]
which yields the error function
\begin{align*}
 \e(\hat Z_n)=\ex_{\tau_n}\hat U_{n+1}^2=[\hat\alpha \hat Z_n&+(a-e)]^2\bigg[
\frac{\hat Z_n^2(1-\hat Z_n)^2}{[\hat Z_n^2+(1-\hat Z_n)^2]^2} \\
&+\frac{R_n[1-2\hat Z_n^2/(\hat Z_n^2+(1-\hat Z_n)^2)]}{\hat T_n}\bigg].
\end{align*}
One may also verify (iv) of Definition (\ref{def}) by calculating:
\begin{align*}
 \ex_{\tau_n}\frac{\hat U_{n+1}}{\hat T_{n+1}}=
-\frac{1}{(\hat T_n+a+b)(\hat T_n+e+f)}\cdot\frac{\hat\alpha[\hat\alpha \hat Z_n+(a-e)]}{[\hat Z_n^2+(1-\hat Z_n)^2]^2}
\end{align*}
which certainly is $\bigo (T_n^{-2})$.

Next we compare $\hat Z_n$ with the original process $Z_n$, with drift
\[ g(x)=(-e-f-a-b)x^3+(a+3e+2f)x^2+(-3e-f)x+e. \]  
It is straightforward to verify that
\begin{equation}\label{ghatg2} [x^2+(1-x)^2]\hat g(x)=g(x), \end{equation}
so that $\hat g$ and $g$ have the same equilibrium points. 
Differentiating (\ref{ghatg2}) yields
\begin{equation*}
 (4x-1)\hat g(x)+[x^2+(1-x)^2]\hat g'(x)=g'(x),
\end{equation*}
so, at any point where $\hat g=g=0$ we have $\sgn [\hat g'(x)]=\sgn [g'(x)]$.
Differentiating again at a point where $\hat g'=g'=0$ yields
$[x^2+(1-x)^2]\hat g''(x)=g''(x)$. In conclusion, $\hat g$ and 
$g$ have ''similar'' equilibrium points in that they are stable,
strictly unstable, or touchpoints simultaneously.

We have proved the following.

\begin{theorem}\label{2drag}
 Suppose that the Pólya urn scheme of drawing two
balls (with or without replacement) according to (\ref{rep2}) has $w_0,b_0>1$
(or just $w_0,b_0>0$ if drawn with replacement). Then, the limit of the fraction
of balls exists a.s.
Furthermore, apart from the case $a=2d$, $f=2c$ and $b=e=0$, in which all we may conclude
is that the limiting variable of the fraction of white balls has no point masses in $(0,1)$,
the limiting random variable has support only on the zeros of $g$,
defined in (\ref{zeros2}), such that the derivative $g'$ is nonpositive there. 
\end{theorem}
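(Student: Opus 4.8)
The plan is to assemble the ingredients established throughout this subsection and feed them into the machinery of Section 2; once each hypothesis has been checked, the argument is essentially bookkeeping. I first treat the generic regime $\tmin>0$, noting that the with-replacement variant is merely the specialization in which every $R_n$ term vanishes and so needs no separate treatment. For existence, conditions (i)--(iv) of Definition \ref{def} have already been verified above: (ii) and (iii) are immediate from $Z_n\in[0,1]$, (i) follows from $\tmin>0$ via the displayed bounds on $n\gamma_n$, and (iv) from the computation giving $|\ex_n U_{n+1}/T_{n+1}|=\bigo(T_n^{-2})$. Since the drift $g$ of (\ref{zeros2}) is a polynomial, hence continuous, Theorem \ref{main} applies and delivers at once the a.s.\ existence of $\lim_n Z_n$ and the fact that this limit lies in $Q_g$, settling the existence claim.

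For the support statement I would discard from $Q_g$ every strictly unstable zero (those with $g'>0$), the boundary values $g(0)=e\geq 0$ and $g(1)=-b\leq 0$ already guaranteeing at least one admissible zero $x^*$ with $g'(x^*)\leq 0$. The exclusion falls into three regimes. A strictly unstable interior zero $x_u\in(0,1)$ at which the noise does not vanish, $\e(x_u)>0$, is removed by Theorem \ref{Pem}, whose lower bound (\ref{v}) holds near $x_u$ by continuity and positivity of $\e$. A strictly unstable interior zero at which $\e$ \emph{does} vanish would force $\Psi(x_u)=0$, since $\e(x)=x(1-x)\Psi(x)+\bigo(T_n^{-1})$; but the factorization (\ref{psi}) together with the relation (\ref{rel}) shows, through the case analysis on whether $C_x$, $B_x$, $A_x$, or none of them vanishes identically, that $g'(x_u)\leq 0$ at every root of $\Psi$ in $(0,1)$, so no such zero is strictly unstable. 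A strictly unstable boundary zero may by color symmetry be placed at the origin; setting $e=0$, the sub-cases (b) and (c) satisfy the hypotheses (\ref{ukrav})--(\ref{xkrav}) of Theorem \ref{renlund} (with $kZ_k\to\infty$ because $W_k\to\infty$), excluding the origin. Hence, whenever $g\not\equiv 0$, the limit is supported on $\{x\in Q_g:g'(x)\leq 0\}$.

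The degenerate case $a=2d,\ f=2c,\ b=e=0$ forces $g\equiv 0$, so Theorem \ref{main} yields only convergence with no localization; nonetheless Theorem \ref{Pem} still applies at each interior point and shows the limit has no point masses in $(0,1)$, exactly the weaker conclusion recorded. It then remains to settle $\tmin=0$ by the six sub-cases: cases 1--3 give direct a.s.\ convergence to the unique stable zero, while in cases 4--6 I would pass to the embedded process $\hat Z_n$ obtained by ignoring the draws that leave the urn unchanged, verifying (i)--(iv) for it directly. The identities (\ref{ghatg}) and (\ref{ghatg2}), differentiated once and twice, show that $\hat g$ and $g$ share the same zeros with the same classification into stable, strictly unstable, and touchpoints, so the conclusions for $\hat Z_n$ carry over verbatim to $Z_n$.

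The main obstacle is the second regime above, namely ruling out that a strictly unstable interior zero coincides with a zero of the noise. This is where essentially all of the computation resides, as one must solve $\Psi(x)=0$ on $(0,1)$ and check $g'(x)\leq 0$ at each root across the four sub-cases; the factorization (\ref{psi}) and the identity $A_x=B_x-2C_x$ are precisely the tools that keep these sign computations tractable.
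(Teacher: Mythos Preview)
Your proposal is correct and follows essentially the same route as the paper: verify (i)--(iv) to invoke Theorem \ref{main}, then prune the strictly unstable zeros via Theorem \ref{Pem} (interior, nonvanishing noise), the $\Psi$-case analysis (interior, vanishing noise), and Theorem \ref{renlund} (boundary), handle $g\equiv 0$ and the $\tmin=0$ sub-cases exactly as outlined. You have correctly identified that the substantive work is the four-case computation showing $g'\leq 0$ at every interior root of $\Psi$, and your summary of the $\tmin=0$ reduction via the embedded process $\hat Z_n$ and the identities (\ref{ghatg}), (\ref{ghatg2}) matches the paper's treatment.
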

In Theorem 10.1 of \cite{Mah08} one can find a central limit theorem for the
number of white balls in the urn scheme \ref{rep2} with the added constraints
of a constant row sum larger than one (i.e.\ $a+b=c+d=e+f=K\geq 1$) and $a-2c+e=0$, 
which together has the effect that the drift function becomes linear (i.e.\ $\alpha=\beta=0$
in \ref{coeff}). It is also noted there (Proposition 10.3) that the
fraction of white balls converge (in probability) to $-e/\gamma$.

\vspace{0.3cm}
\noindent\textbf{Acknowledgements} 

\noindent 
This paper, with minor modifications, has served as my licentiate thesis which was defended
2009-03-06 at Uppsala university.
I am indebted to my PhD supervisors Sven Erick Alm and Svante Janson for their
encouragement and for providing me with numerous ways to improve this paper, 
both with the mathematics and clarity of exposition. 

The thesis was partly finished while attending the Mittag-Leffler institute. 
I am indebted to the Royal Swedish Academy of Sciences 
for financial support towards attending the institute during spring 2009.

In the midst of writing this thesis I became a father. 
A most loving acknowledgement to my wonderful wife Ida who has
looked after both me and our amazing little girl, Emma, during this
time.

\end{document}